\begin{document}
\newtheorem{theorem}{Theorem}[section]
\newtheorem{lemma}[theorem]{Lemma}
\newtheorem{claim}[theorem]{Claim}
\newtheorem{definition}[theorem]{Definition}
\newtheorem{conjecture}[theorem]{Conjecture}
\newtheorem{proposition}[theorem]{Proposition}
\newtheorem{algorithm}[theorem]{Algorithm}
\newtheorem{corollary}[theorem]{Corollary}
\newtheorem{observation}[theorem]{Observation}
\newtheorem{problem}[theorem]{Open Problem}
\newcommand{\noin}{\noindent}
\newcommand{\ind}{\indent}
\newcommand{\al}{\alpha}
\newcommand{\om}{\omega}
\newcommand{\pp}{\mathcal P}
\newcommand{\ppp}{\mathfrak P}
\newcommand{\R}{{\mathbb R}}
\newcommand{\N}{{\mathbb N}}
\newcommand\eps{\varepsilon}
\newcommand{\E}{\mathbb E}
\newcommand{\Prob}{\mathbb{P}}
\newcommand{\pl}{\textrm{C}}
\newcommand{\dang}{\textrm{dang}}
\renewcommand{\labelenumi}{(\roman{enumi})}
\newcommand{\bc}{\bar c}
\newcommand{\G}{{\mathcal{G}}}
\newcommand{\expect}[1]{\E \left [ #1 \right ]}
\newcommand{\floor}[1]{\left \lfloor #1 \right \rfloor}
\newcommand{\ceil}[1]{\left \lceil #1 \right \rceil}
\newcommand{\of}[1]{\left( #1 \right)}
\newcommand{\set}[1]{\left\{ #1 \right\}}
\newcommand{\angs}[1]{\left\langle #1 \right\rangle}
\newcommand{\sqbs}[1]{\left[ #1 \right]}
\newcommand{\sm}{\setminus}
\newcommand{\bfrac}[2]{\of{\frac{#1}{#2}}}
\renewcommand{\k}{\kappa}
\renewcommand{\l}{\ell}
\renewcommand{\b}{\beta}
\newcommand{\blue}[1]{{\color{blue} #1}}

\title{The Total Acquisition Number of Random Graphs}

\author{Deepak Bal}
\address{Department of Mathematics, Ryerson University, Toronto, ON, Canada, M5B 2K3}
\email{deepak.c.bal@ryerson.ca}

\author{Patrick Bennett}
\address{Department of Computer Science, University of Toronto, Toronto, ON, Canada, M5S 3G4}
\email{patrickb@cs.toronto.edu}

\author{Andrzej Dudek}
\address{Department of Mathematics, Western Michigan University, Kalamazoo, MI 49008, USA}
\email{\tt andrzej.dudek@wmich.edu}
\thanks{The third author is supported in part by Simons Foundation Grant \#244712 and by a grant from the Faculty Research and Creative Activities Award (FRACAA), Western Michigan University.}

\author{Pawe\l{} Pra\l{}at}
\address{Department of Mathematics, Ryerson University, Toronto, ON, Canada, M5B 2K3}
\thanks{The fourth author is supported in part by NSERC and Ryerson University}
\email{\texttt{pralat@ryerson.ca}}

\maketitle
\begin{abstract}
Let $G$ be a graph in which each vertex initially has weight 1. In each step, the weight from a vertex $u$ to a neighbouring vertex $v$ can be moved, provided that the weight on $v$ is at least as large as the weight on $u$. The total acquisition number of $G$, denoted by $a_t(G)$, is the minimum possible size of the set of vertices with positive weight at the end of the process.

LeSaulnier, Prince, Wenger, West, and Worah asked for the minimum value of $p=p(n)$ such that $a_t(\G(n,p)) = 1$ with high probability, where $\G(n,p)$ is a binomial random graph. We show that $p = \frac{\log_2 n}{n} \approx 1.4427 \ \frac{\log n}{n}$ is a sharp threshold for this property. We also show that almost all trees $T$ satisfy $a_t(T) = \Theta(n)$, confirming a conjecture of West.
\end{abstract}

\section{Introduction}

Gossiping and broadcasting are two well studied problems involving information dissemination in a group of individuals connected by a communication network \cite{HHL}. In the gossip problem, each member has a unique piece of information which they would like to pass to everyone else. In the broadcast problem, there is a single piece  of information (starting at one member) which must be passed to every other member of the network. 
These problems have received attention from mathematicians as well as computer scientists due to their applications in distributed computing \cite{BGRV}. 
Gossip and broadcast are respectively known as ``all-to-all'' and ``one-to-all'' communication problems. In this paper, we consider the problem of acquisition, which is a type of ``all-to-one'' problem. 
Suppose each vertex of a graph begins with a weight of 1 (this can be thought of as the piece of information starting at that vertex). A \textbf{total acquisition move} is a transfer of all the weight from a vertex $v$ onto a vertex $u$, provided that immediately prior to the move, the weight on $u$ is at least the weight on $v$. Suppose a number of acquisition moves are made until no legal moves remain. Such a maximal sequence of moves is referred to as an \textbf{acquisition protocol}  and the vertices which retain positive weight after an acquisition protocol is called a \textbf{residual set}. Note that any residual set is necessarily an independent set. Given a graph $G$, we are interested in the minimum possible size of a residual set and refer to this number as the \textbf{total acquisition number of $G$}, denoted $a_t(G)$.  We are mainly concerned with the question, ``for which graphs $G$ is $a_t(G)=1$?'' {\em i.e.} when can one special member of the network acquire all the information subject to the use of total acquisition moves? The restriction to total acquisition moves can be motivated by the so-called ``smaller to larger'' rule in disjoint set data structures. For example, in the UNION-FIND data structure with linked lists, when taking a union, the smaller list should always be appended to the longer list. This heuristic improves the amortized performance over sequences of union operations.  

The parameter $a_t(G)$ was introduced by Lampert and Slater~\cite{LS} and subsequently studied in~\cite{SW, LPWWW}.  In~\cite{LS}, it is shown that $a_t(G)\le \floor{\frac{n+1}{3}}$ for any connected graph $G$ on $n$ vertices and that this bound is tight.  Slater and Wang~\cite{SW}, via a reduction to the three-dimension matching problem, show that it is NP-complete to determine whether $a_t(G)=1$ for general graphs $G$. In LeSaulnier {\em et al.}~\cite{LPWWW}, various upper bounds on the acquisition number of trees are shown in terms of the diameter and the number of vertices, $n$. They also show that $a_t(G) \le 32\log n\log\log n$ (throughout the paper, $\log n$ denotes the natural logarithm) for all graphs with diameter 2 and conjecture that the true bound is constant. For work on game variations of the parameter and variations where acquisition moves need not transfer the full weight of  vertex, see~\cite{Wen, PWW, SW2}.

\bigskip

 Randomness often plays a part in the study of information dissemination problems, usually in the form of a random network or a randomized protocol, see {\em e.g.} \cite{gos, FM, G95}. In this paper we study the total acquisition number of the \textbf{Erd\H{o}s-R\'{e}nyi-Gilbert random graph} $\G(n,p)$ where potential edges among $n$ vertices are added independently with probability $p$. We also consider the total acquisition number of random trees. Our main theorem is the following.

\begin{theorem}\label{mainthm}
Fix any $\eps > 0$. If $p=p(n) \ge \frac{1+ \eps}{\log 2} \cdot \frac{\log n}{n}$, then with high probability, $a_t(\G(n,p)) = 1$. 
\end{theorem}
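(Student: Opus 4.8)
The plan is to reduce the statement to a purely structural embedding problem. First recall (and, if it is not cited from \cite{LS,LPWWW}, quickly establish) the following: if $G$ has a spanning tree $T$, rooted at some vertex $v^*$, such that at every node the children $c_1,\dots,c_d$, listed in non-decreasing order of subtree size $\sigma(\cdot)$, satisfy $\sigma(c_j)\le 1+\sum_{i<j}\sigma(c_i)$ for all $j$, then $a_t(G)=1$. Indeed, process the nodes in a post-order that visits siblings in increasing subtree size and, when a node is processed, move all its weight to its parent; at that moment the node carries exactly its subtree weight, and the displayed inequality is precisely the legality condition, so the protocol ends with all weight at $v^*$. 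Note that the condition forces $\sigma(c_1)=1$ (every internal node has a leaf child) and, iterating, $\sigma(x)\le 2^{\,\#\text{children}(x)}$; in particular the root needs degree at least $\log_2 n$, which is what pins the threshold at $\tfrac{\log_2 n}{n}$. So it suffices to show that, whp, $\G(n,p)$ contains such a \emph{feasible} spanning tree.

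\textbf{Target tree.} Set $k=\lceil\log_2 n\rceil$ and let $\mathcal C_0$ be a single vertex, $\mathcal C_j$ a root joined to the roots of copies of $\mathcal C_0,\mathcal C_1,\dots,\mathcal C_{j-1}$. Then $\mathcal C_j$ is feasible, has $2^j$ vertices, its root has $j$ children, every child has strictly fewer children than its parent, and $\mathcal C_k$ has exactly $n/2$ leaves and $O(n/2^d)$ nodes of degree $\ge d$ for every $d$. Let $T^*$ be $\mathcal C_k$ trimmed to exactly $n$ vertices (this is routine: recursively give the root children of sizes $1,2,4,\dots,2^{t-1},r$ for the appropriate $t$ and remainder $r$, re-sorted). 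On the other side, for $p=\tfrac{1+\eps}{\log 2}\cdot\tfrac{\log n}{n}$ each degree in $\G(n,p)$ is Binomial with mean $(1+\eps+o(1))\log_2 n$, so by Chernoff bounds one checks that whp, for every $d$, the number of vertices of degree $\ge d$ is at least the number of nodes of $T^*$ that require degree $d$: the binding case is $d\approx\log_2 n$, where $T^*$ requires $O(1)$ such nodes and $\G(n,p)$ supplies $n-o(n)$ of them — this is the point where $\eps>0$ enters — and $d$ small, where $T^*$ needs at most $n$ nodes of degree $\ge 1$ and $\G(n,p)$ has no isolated vertex. Thus the degree sequence of $\G(n,p)$ dominates the requirements of $T^*$.

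\textbf{Embedding.} Build an embedding $\phi$ of $T^*$ into $\G(n,p)$ top-down, handling the nodes in decreasing order of number of children $r(\cdot)$ (so a parent is always handled before its children and, when $x$ is handled, the set $U$ of already-used vertices has $|U|=O(n/2^{r(x)})$). At node $x$, with $\phi(x)$ already chosen, pick $r(x)$ hitherto-unused neighbours of $\phi(x)$ to be the images of $x$'s children, sending the child destined to carry a subtree of size $s$ to a vertex of degree $\ge\lceil\log_2 s\rceil$, and reserving the polynomially few low-degree vertices of $\G(n,p)$ for leaf roles. The expected number of unused neighbours of $\phi(x)$ is about $(1+\eps)\log_2 n\,(1-2^{-r(x)})$, which exceeds $r(x)$ by a margin $\Omega(\eps\log_2 n)$ for every $x$; together with the degree-sequence domination this should guarantee that suitable unused neighbours are always available, so the procedure terminates with a feasible spanning tree and hence $a_t(\G(n,p))=1$.

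\textbf{Main obstacle.} The crux is making the last step rigorous for \emph{arbitrary} $\eps>0$: a straightforward union bound over all $\Theta(n)$ nodes of the per-step failure probability is too lossy (it would force $\eps$ to be larger than an absolute constant), and, since $T^*$ is genuinely deeper than the typical distances in $\G(n,p)$, one cannot simply recurse by passing to a random induced subgraph — a random half of the vertices already induces a disconnected graph for small $\eps$. What is needed instead is to condition on a short list of deterministic pseudorandom properties of $\G(n,p)$ above the threshold (the degree-sequence domination above, absence of small dense subsets, and expansion/Hall-type statements ensuring that every vertex retains many unused neighbours of each relevant degree throughout the construction), and then carry out the embedding deterministically, paying particular attention to the ``deep and thin'' part of $T^*$ where the degree budget is tightest. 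Isolating the exact pseudorandom properties and organising the order of the embedding so that these deep parts are handled while enough fresh vertices remain is the heart of the proof; the remaining points — $n$ not a power of $2$, verifying feasibility of $T^*$, and the routine concentration estimates — are standard.
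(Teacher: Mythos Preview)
Your feasibility condition is exactly the paper's ``cut-off property,'' and your instinct to embed a binomial-type tree top-down is the right starting point. However, the proposal has a genuine gap at precisely the place you flag as the ``main obstacle,'' and the pseudorandom-property route you sketch will not close it.

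The claim that $|U|=O(n/2^{r(x)})$ when you handle a node $x$ is not correct once $r(x)$ is small. In $\mathcal C_k$ there are $n/4$ nodes with exactly one child, and their leaf-children are the last $n/4$ vertices to be assigned; as you process them greedily the pool of unused vertices shrinks from $n/4$ to $0$. For the last such node the success probability is $p=O(\log n/n)$, and well before that the per-step failure probability is only $n^{-(1+\eps)/(4\log 2)}\approx n^{-0.36}$, so even a union bound over the first half of these $n/4$ steps already diverges for every fixed $\eps>0$. No list of deterministic expansion properties can rescue this: once fewer than $n/\log n$ vertices remain unused, a typical already-embedded vertex has $o(1)$ unused neighbours, so ``every vertex retains many unused neighbours throughout'' is simply false near the end.

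The paper sidesteps both issues by \emph{not} embedding a spanning tree. It builds its cut-off tree with a designated layer of ``bereft'' parents (each scheduled to receive a single leaf), deletes those leaves to obtain a tree $T'$ on only $(\tfrac{4}{5}+o(1))n$ vertices, and embeds $T'$ greedily; because at least $n/5$ vertices are always unexposed, the expected number of fresh neighbours stays $\Omega(\log n)$ at every step. To make the union bound go through for arbitrarily small $\eps$, each internal node is built with $\sigma=\Theta(1/\eps^2)$ extra leaf-slots reserved for ``bad'' children (those with too few fresh neighbours), so the event to bound is ``some node has more than $\sigma$ bad children,'' whose probability is $n^{-\Theta(\sigma\eps^2)}=o(1/n)$. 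Finally, the remaining $n/5$ unexposed vertices are attached to the $(4/5)n$ bereft parents via a Hall-type matching argument on genuinely fresh edges; the $4{:}1$ ratio is what makes the Hall computation close. These two devices --- leaving a constant fraction of the graph unexposed until a final matching step, and absorbing rare local failures into pre-planted leaf slots --- are the missing ideas in your plan.
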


In particular, by taking $p=1/2$, our result implies that while the question ``Is $a_t(G)=1$?'' is NP-complete, the answer is ``yes'' for almost all graphs. 

\bigskip

In~\cite{LPWWW}, the authors mention that understanding the behaviour of $a_t(\G(n,p))$ near the connectivity threshold, $p =\frac{\log n}{n}$, would be of particular interest. In the theory of random graphs it is usually the case that some obvious necessary condition is also a sufficient one (for example, the threshold for connectivity coincides with the one for the minimum degree at least 1; the threshold for hamiltonicity is the same as the one for the minimum degree at least 2; etc.). Hence, one could expect that $a_t(\G(n,p))=1$ already at the time a random graph becomes connected. However, it turns out that connectivity is the wrong ``obvious'' condition. Consider the following observation.


\begin{observation}\label{obs:min_degree}
If vertex $v$ is to acquire weight $w$ (at any time during the process of moving weight around), then $v$ has degree at least $\log_2 w$.
\end{observation}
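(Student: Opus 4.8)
The plan is to track how fast the weight sitting on a fixed vertex $v$ can grow. A total acquisition move that deposits weight onto $v$ must originate at a neighbour $u$ of $v$ whose weight, immediately before the move, is at most the weight currently on $v$; since such a move transfers \emph{all} of $u$'s weight, the weight on $v$ at most doubles as a result of a single incoming move. Hence if $v$ ever carries weight $w$, and $k$ denotes the number of moves that deposited positive weight onto $v$ up to that point, then starting from the initial weight $1$ we get $w \le 2^{k}$.

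It then remains to bound $k$ by $\deg(v)$. I would argue that each neighbour of $v$ can be the source of at most one move into $v$. The point is that once a vertex has weight $0$ it keeps weight $0$ forever: a legal move onto a weight-$0$ vertex would require the source to have weight at most $0$, i.e.\ weight $0$, which transfers nothing. So after a neighbour $u$ pushes (all of) its weight onto $v$, the vertex $u$ has weight $0$ and can never again send positive weight to $v$. Therefore the number of moves that put positive weight on $v$ is at most the number of neighbours of $v$, that is, $k \le \deg(v)$. Combining the two bounds, $w \le 2^{\deg(v)}$, and so $\deg(v) \ge \log_2 w$, as claimed.

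The argument carries no real difficulty; the only point to handle with care is the bookkeeping. One should fix the first time at which $v$ holds weight $w$, restrict attention to the moves strictly preceding that time, discard moves that transfer zero weight, and apply the doubling estimate with the correct current weight of $v$ at each such move — a one-line induction on the number of incoming moves. The main thing to get right is the claim that weight $0$ is absorbing, which is exactly what forces the ``at most one move per neighbour'' bound and hence the dependence on $\deg(v)$ rather than on the (unbounded) total number of moves.
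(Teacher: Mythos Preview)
Your argument is correct and is essentially the same as the paper's: each incoming move at most doubles the weight on $v$, and each neighbour can send weight to $v$ at most once (since weight $0$ is absorbing), so $w \le 2^{d(v)}$. The paper compresses this into a single line by noting that $v$ can acquire at most $1 + 2 + \cdots + 2^{d(v)-1}$ beyond its initial $1$, for a total of $2^{d(v)}$; your version simply unpacks the same reasoning.
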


\begin{proof}
Note that $v$ can only ever acquire $1 + 2 + \ldots + 2^{d(v)-1}$, in addition to the $1$ it starts with, so that is a total of $2^{d(v)}$.
\end{proof}

So if $a_t(G)=1$ then the vertex which eventually acquires all the weight must have degree at least $\log_2 n$.  Now it is true that when $p = \frac{\log n}{n}$, there exist vertices of this degree (see \cite{Bol}). But just one such vertex does not suffice; a path of significant length consisting of high degree vertices is necessary. Such a path does not exist until the expected degree exceeds $\log_2 n$. So if $p < \log_2 n / n$, then $a_t(\G(n,p)) > 1$.  In fact we prove the following stronger theorem.

\begin{theorem}\label{lowerbdthm}
 Suppose that $p = \frac {c+o(1)}{\log 2} \cdot \frac{\log n}{n}$ for some fixed  $c \in (0, 1)$. If $0 < \eps < \min\{c, 1-c\},$ then with high probability, $n^{1-c - \eps } \le a_t (\G(n,p)) \le n^{1-c + \eps }$.
\end{theorem}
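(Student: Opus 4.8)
The plan is to prove the two inequalities by quite different methods. Throughout write $\mu := np = \tfrac{c+o(1)}{\log 2}\log n$, so that the typical degree of $\G(n,p)$ is $(c+o(1))\log_2 n$.

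\medskip
\noindent\textbf{Lower bound.} Let $\phi^\ast=\phi^\ast(\G(n,p))$ be the largest weight that can ever appear on a single vertex under any acquisition protocol. Since the total weight is always $n$, any residual set $R$ satisfies $n=\sum_{v\in R}(\text{weight of }v)\le |R|\,\phi^\ast$, so $a_t(\G(n,p))\ge n/\phi^\ast$, and it is enough to show $\phi^\ast\le n^{c+\eps}$ whp. First one records that whp $\Delta(\G(n,p))\le\Delta_0$ for some $\Delta_0=O(\log n)$, and puts $L:=\ceil{\log_2\Delta_0}+1=(1+o(1))\log_2\log n$. The combinatorial heart is a deterministic claim: in any graph with maximum degree at most $\Delta_0$, if some vertex ever carries weight exceeding $2^K$, then for every $r$ with $rL<K$ there is a path $v_0v_1\cdots v_r$ with $d(v_i)\ge K-iL$ for all $i$. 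To see this, use that once a vertex has weight $0$ it stays empty, so each vertex delivers its whole weight at most once; hence a vertex $u$ receives at most $\Delta_0$ deliveries, each of size at most the weight of the sender at that moment, so $\phi_u\le 1+\Delta_0\max_w\phi_w$ where $w$ ranges over senders that deliver to $u$. Starting from a vertex $v_0$ that reaches weight $>2^K$ (and which, after truncating the protocol, never delivers), this forces a sender $v_1$ with $\phi_{v_1}>(2^K-1)/\Delta_0\ge 2^{K-L}$; iterating down the chain of deliveries — which is a simple path, since each intermediate vertex delivers only to its predecessor — yields the path, and Observation~\ref{obs:min_degree} supplies the degree bounds. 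Now take $K=\floor{(c+\eps)\log_2 n}$ and $r=\ceil{\log\log n}$, so that $K-iL=(c+\eps-o(1))\log_2 n$ stays a constant factor above $\mu$ for all $i\le r$. A first–moment computation then finishes it: for a fixed candidate path the events ``$v_i$ has enough neighbours off the path'' involve pairwise disjoint sets of potential edges and are therefore independent, and each has probability at most $\Prob(\mathrm{Bin}(n,p)\ge (c+\eps-o(1))\log_2 n)=n^{-\Omega(1)}$ by the Chernoff bound; multiplying $r+1$ such factors beats the at most $n^{r+1}p^{r}$ choices of path, so the expected number of such paths tends to $0$. Hence whp $\phi^\ast\le 2^K\le n^{c+\eps}$ and $a_t(\G(n,p))\ge n^{1-c-\eps}$.

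\medskip
\noindent\textbf{Upper bound.} The goal is a protocol leaving at most $n^{1-c+\eps}$ vertices positive. For a power of two $m$ let $\mathcal T_m$ be the rooted tree whose root has $\log_2 m$ children, the $i$-th of which is the root of a copy of $\mathcal T_{2^{i-1}}$ (with $\mathcal T_1$ a single vertex); a routine bottom-up argument moves all the weight of $\mathcal T_m$ onto its root. Note $\mathcal T_m$ has $\Theta(m)$ vertices, maximum degree $\log_2 m$, about half of its vertices are leaves (which may be embedded onto arbitrary vertices), and its ``heavy path'' consists of $\Theta(\log m)$ vertices whose required degrees decrease from $\log_2 m$ down to $1$. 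Fix $m=n^{c-\eps}$ (this is where $\eps<c$ is used), so even $\log_2 m=(c-\eps)\log_2 n$ lies below the typical degree of $\G(n,p)$. I would then: (i) find $\Theta(n^{1-c+\eps})$ vertex-disjoint copies of $\mathcal T_m$ in $\G(n,p)$ covering all but at most $n^{1-c+\eps/2}$ vertices; (ii) funnel each copy to its root; (iii) take the $\Theta(n^{1-c+\eps})$ roots, together with the $\le n^{1-c+\eps/2}$ uncovered vertices, as the residual set. Its size is $O(n^{1-c+\eps/2}+n^{1-c+\eps})\le n^{1-c+\eps}$.

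\medskip
\noindent\textbf{Main obstacle.} The hard part is step (i): an almost-perfect packing of $\G(n,p)$ by copies of $\mathcal T_m$. A one-copy-at-a-time greedy argument stalls once a constant fraction of the vertices has been used, because a typical remaining vertex then has too few remaining neighbours to serve as a root of $\mathcal T_m$. Getting past this should require a more careful iterative embedding that is economical with ``useful'' vertices — steering low-degree vertices, and vertices with few unused neighbours, into leaf slots while keeping enough moderate-degree vertices in reserve for heavy paths — combined with the concentration of neighbourhood sizes in $\G(n,p)$ to show the process can be continued until only $n^{1-c+\eps/2}$ vertices remain; alternatively one might absorb a small leftover into the trees already built, before their funnelling protocols are run. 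Everywhere one should replace $\eps$ by slightly smaller constants in the intermediate claims so that the bounds $n^{1-c\pm\eps}$ are recovered at the end.
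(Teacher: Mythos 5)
Your lower bound is correct and takes essentially the same route as the paper. You derive $a_t \ge n/\phi^*$ by weight conservation, and then show $\phi^* \le n^{c+\eps}$ whp in two steps: (a) a deterministic claim that a vertex reaching weight $2^K$ forces a simple path $v_0\cdots v_r$ with $d(v_i)\ge K-iL$, proved via the recurrence $\phi_u\le 1+\Delta_0\max_w\phi_w$ and the observation that a dead vertex stays dead so the delivery chain is a simple path — this is a clean formalization of what the paper obtains from Observation~\ref{obs:min_degree} by iterated averaging; and (b) a first-moment bound killing such high-degree paths. The paper's Lemma~\ref{lem:propertyforlower} is the same first-moment step with a fixed path length $\gamma$ rather than your $\lceil\log\log n\rceil$; both work, since your degree threshold $K-iL = (c+\eps-o(1))\log_2 n$ stays a constant factor above the mean for all $i\le r$, so the independent-Chernoff-times-path-count calculation closes.

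For the upper bound you correctly identify the paper's plan — pack $\G(n,p)$ by $\Theta(n^{1-c+\eps})$ disjoint copies of a tree of size about $n^{c-\eps}$, funnel each to its root, and return roots plus leftovers as the residual set — but the packing step, which you yourself flag as the hard part, is left unresolved, and the tree $\mathcal T_m$ you choose makes it worse. $\mathcal T_m$ is essentially the Section~\ref{sec:warmingup} warm-up construction, which the paper explicitly says does not scale to the sparse regime: along its heavy path the degree demands $\log_2 m,\log_2 m-1,\ldots,1$ occur at every depth, so a top-down, level-by-level embedding meets near-maximal degree requirements deep into the process and stalls once a constant fraction of vertices has been consumed. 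The paper's resolution is structural, not just a more careful greedy: the tree $T_\rho$ of Definitions~\ref{def:sequence}--\ref{def:tree} has branching factor only $\beta\log n$ (with $\beta<\frac{c}{10\log 2}$) throughout its bottom $\alpha\frac{\log n}{\log\log n}$ levels, which is where almost all of its mass sits, so the level-by-level embedding survives even when only $\approx n/5$ vertices remain (Lemma~\ref{lem:embed}); each internal vertex carries $\sigma=\Theta(1/\eps^2)$ spare leaf slots to absorb occasional low-degree children; and the final $\approx n/5$ leftover vertices are matched onto bereft tree vertices by a defect version of Hall's theorem (Lemma~\ref{lem:matching}) rather than being covered by the greedy embedding at all. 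Your remarks about steering low-degree vertices into leaf slots and absorbing a small leftover point in the right direction, and your "roots + uncovered" residual accounting matches the paper's $L+L+n^{1-c+\eps}$ bookkeeping, but without a tree whose per-level degree demands are uniform and small at the bottom — and without the $\sigma$-buffer and the matching step — the argument does not close.
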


This result implies that at the connectivity threshold ($p = \frac{\log n}{n}$) the total acquisition number is already of polynomial size, namely it is at least, say, $n^{0.3}$. Theorems \ref{mainthm} and \ref{lowerbdthm} together imply that $p = \frac{\log_2 n}{n}$ is the sharp threshold for the property $a_t(G)=1$.

Moreover, we prove the following theorem, confirming a conjecture of West~\cite{West1,West2}. Before we state the result, we need a few more definitions. For $n \in \N$, let $\mathcal{T}_n$ be the family of labelled trees on $n$ vertices. We say that some given property $P$ holds for \textbf{almost all trees} if the ratio between the number of trees in $\mathcal{T}_n$ with property $P$ and the total number of trees in $\mathcal{T}_n$ tends to 1 as $n \to \infty$.

\begin{theorem}\label{thm:almostalltrees}
For almost all trees $T\in \mathcal{T}_n$,
\[
a_t(T) \ge \frac{n}{3e^3}.
\]
\end{theorem}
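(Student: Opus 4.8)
The plan is to extract Theorem~\ref{thm:almostalltrees} from Observation~\ref{obs:min_degree} together with the (classical) degree statistics of a uniform random labelled tree; no further structural facts about the acquisition process are needed. The starting point is a purely deterministic inequality valid for \emph{any} graph $G$ on $n$ vertices: if $R$ is the residual set of some acquisition protocol and $w(s)$ is the final weight of $s\in R$, then weight conservation gives $\sum_{s\in R}w(s)=n$, while Observation~\ref{obs:min_degree} gives $w(s)\le 2^{\deg_G(s)}$. Splitting the vertices of $R$ into those of degree at most $5$ and those of degree at least $6$,
\[
n \;=\; \sum_{s\in R} w(s) \;\le\; \sum_{s\in R}2^{\deg_G(s)} \;\le\; 2^{5}\,|R| \;+\; \sum_{v\,:\,\deg_G(v)\ge 6} 2^{\deg_G(v)} .
\]
Writing $X(G):=\sum_{v:\deg_G(v)\ge 6}2^{\deg_G(v)}$, this shows that \emph{every} residual set satisfies $|R|\ge\bigl(n-X(G)\bigr)/32$, hence $a_t(G)\ge\bigl(n-X(G)\bigr)/32$. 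Since $32/(3e^{3})\approx 0.531$, the theorem will follow once I show $X(T)\le\bigl(1-\tfrac{32}{3e^{3}}\bigr)n\approx 0.469\,n$ for almost all $T\in\mathcal T_n$.

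For the random‑tree estimate I would use the Prüfer correspondence: a uniform tree $T\in\mathcal T_n$ corresponds to a uniform word in $[n]^{n-2}$, and $\deg_T(i)=1+c_i$ where $(c_1,\dots,c_n)\sim\mathrm{Multinomial}(n-2;\tfrac1n,\dots,\tfrac1n)$. Hence $\E\,\#\{i:\deg_T(i)=d\}\sim n\,e^{-1}/(d-1)!$, so
\[
\frac{\E\,X(T)}{n}\;\longrightarrow\;\sum_{d\ge 6}\frac{e^{-1}}{(d-1)!}\,2^{d}\;=\;\frac{2}{e}\sum_{j\ge 5}\frac{2^{j}}{j!}\;=\;2e-\frac{14}{e}\;\approx\;0.287\;<\;0.469 ,
\]
using $\sum_{j\ge0}2^{j}/j!=e^{2}$. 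Because altering a single letter of the Prüfer word changes each count $\#\{i:\deg_T(i)=d\}$ by at most $2$, these counts — and therefore the bounded part $\sum_{6\le\deg_T(v)\le d_0}2^{\deg_T(v)}$ for any fixed $d_0$ — are concentrated by McDiarmid's inequality, while the remaining tail $\sum_{\deg_T(v)>d_0}2^{\deg_T(v)}$ has mean $o(1)\cdot n$ as $d_0\to\infty$ and can be forced below $\eps n$ with high probability by a crude first/second‑moment bound. Putting these together, $X(T)\le 0.3\,n$ with high probability, and then $a_t(T)\ge 0.7\,n/32>n/(3e^{3})$.

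The only genuinely delicate point is the concentration of $X(T)$: its summands $2^{\deg_T(v)}$ are not uniformly bounded, so one cannot apply a bounded‑differences inequality directly. I would handle this by first conditioning on the high‑probability event $\Delta(T)=O(\log n/\log\log n)$ (under which every summand is $n^{o(1)}$), applying McDiarmid to the bounded regime, and disposing of the rare very‑high‑degree vertices with a separate moment estimate. Everything else — the asymptotics of the degree sequence, the arithmetic with the series $e^{2}$, and the final inequality — is routine. I should add that this argument in fact produces a constant noticeably larger than $1/(3e^{3})$ (optimizing the cut‑off, or using that a residual set is independent, improves it further); but $n/(3e^{3})$ already suffices to confirm West's conjecture that $a_t(T)=\Theta(n)$ for almost all trees.
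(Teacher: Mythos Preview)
Your argument is correct and genuinely different from the paper's. The paper's proof does not use Observation~\ref{obs:min_degree} at all; instead it isolates a local obstruction: a \emph{long leaf} is an induced path $v\!-\!w\!-\!x\!-\!y$ with $\deg(v)=1$ and $\deg(w)=\deg(x)=2$, and one checks that in any acquisition protocol at least one of $v,w,x$ retains positive weight, so $a_t(T)$ is at least the number of long leaves. A first/second-moment (Chebyshev) calculation on the uniform labelled tree then shows this count is $(1+o(1))\,n/e^{3}$ a.a.s. Your route is more analytic: you combine weight conservation with the degree cap $w(s)\le 2^{\deg(s)}$ to get the deterministic inequality $a_t(G)\ge (n-X(G))/32$, and then control the weighted degree sum $X(T)$ via Pr\"ufer statistics. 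The paper's approach is shorter and needs no delicate concentration step; yours is more flexible (it applies to any graph class with few high-degree vertices), recycles Observation~\ref{obs:min_degree} rather than introducing a new structural lemma, and with a little optimisation yields a visibly larger constant than $1/(3e^{3})$.

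One remark on your concentration step, which you flag as the only delicate point: the McDiarmid-plus-truncation scheme you describe works, but you can shortcut it. Since the coordinates of a multinomial vector are negatively associated and $g(c)=2^{c+1}\mathbf 1[c\ge 5]$ is increasing, $\operatorname{Cov}(g(c_u),g(c_v))\le 0$ for $u\neq v$; meanwhile $\operatorname{Var}(g(c_v))\le \mathbb E[4^{\deg(v)}]=4(1+3/n)^{n-2}=O(1)$. Hence $\operatorname{Var}\bigl(X(T)\bigr)=O(n)$, and a single application of Chebyshev already gives $X(T)\le 0.3n$ with probability $1-O(1/n)$. This avoids the max-degree conditioning and the separate tail estimate.
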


\subsection{Notation and Conventions}
 The \textbf{random graph} $\G(n,p)$ consists of the probability space $(\Omega, \mathcal{F}, \Prob)$, where $\Omega$ is the set of all graphs with vertex set $\{1,2,\dots,n\}$, $\mathcal{F}$ is the family of all subsets of $\Omega$, and for every $G \in \Omega$,
$$
\Prob(G) = p^{|E(G)|} (1-p)^{{n \choose 2} - |E(G)|} \,.
$$
This space may be viewed as the set of outcomes of ${n \choose 2}$ independent coin flips, one for each pair $(u,v)$ of vertices, where the probability of success (that is, adding edge $uv$) is $p.$ Note that $p=p(n)$ may (and usually does) tend to zero as $n$ tends to infinity. All asymptotics throughout are as $n \rightarrow \infty $ (we emphasize that the notations $o(\cdot)$ and $O(\cdot)$ refer to functions of $n$, not necessarily positive, whose growth is bounded). We say that an event in a probability space holds \textbf{with high probability} (or \textbf{w.h.p.}) if the probability that it holds tends to $1$ as $n$ goes to infinity. We often write $\G(n,p)$ when we mean a graph drawn from the distribution $\G(n,p)$.  

All logarithms, unless otherwise noted, are assumed to be natural, {\em i.e.}\ with base $e=2.71828...$. For a vertex $v$ in a graph, we write $d(v)$ for the degree of $v$.

\bigskip

We will use the following Chernoff bound:

\begin{theorem}[\textbf{Chernoff Bound}] 
If $X$ is a binomial random variable with expectation $\mu$, and $0<\delta<1$, then $$\Pr[X < (1-\delta)\mu] \le \exp \left( -\frac{\delta^2 \mu}{2} \right)$$ and if $\delta > 0$,
\[\Pr\sqbs{X > (1+\delta)\mu} \le \exp\of{-\frac{\delta^2 \mu}{2+\delta}}.\]
\end{theorem}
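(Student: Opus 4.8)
The plan is to use the standard exponential-moment (Bernstein) method. I would write $X = \sum_{i=1}^{N} X_i$ as a sum of independent indicator variables with $\Pr[X_i = 1] = p_i$ (for a genuine binomial all the $p_i$ are equal, but the argument below does not use this), so that $\mu = \sum_{i} p_i$. For $t > 0$, Markov's inequality applied to the nonnegative variable $e^{tX}$ gives $\Pr[X \ge a] \le e^{-ta}\,\E[e^{tX}]$, and applied to $e^{-tX}$ it gives $\Pr[X \le a] \le e^{ta}\,\E[e^{-tX}]$. By independence and the inequality $1+x \le e^x$, for every real $s$ one has $\E[e^{sX}] = \prod_{i}\of{1 + p_i(e^s - 1)} \le \exp\of{(e^s - 1)\mu}$.

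For the upper tail I would take $s = t > 0$ and $a = (1+\delta)\mu$, so that $\Pr[X \ge (1+\delta)\mu] \le \exp\of{(e^t-1)\mu - t(1+\delta)\mu}$, and then minimise the exponent over $t$: the optimum is at $t = \log(1+\delta)$, yielding $\Pr[X \ge (1+\delta)\mu] \le \of{\frac{e^{\delta}}{(1+\delta)^{1+\delta}}}^{\mu} = \exp\of{-\mu\bigl((1+\delta)\log(1+\delta) - \delta\bigr)}$. It then remains to prove the purely one-variable inequality $(1+\delta)\log(1+\delta) - \delta \ge \frac{\delta^2}{2+\delta}$ for all $\delta > 0$; I would do this by letting $h(\delta)$ be the difference of the two sides, noting $h(0) = h'(0) = 0$, and checking $h''(\delta) \ge 0$ for $\delta > 0$ after clearing the denominator $(1+\delta)(2+\delta)^2$.

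For the lower tail I would take $s = -t$ with $t > 0$ and $a = (1-\delta)\mu$, obtaining $\Pr[X \le (1-\delta)\mu] \le \exp\of{(e^{-t}-1)\mu + t(1-\delta)\mu}$; the exponent is minimised at $t = -\log(1-\delta) > 0$, which gives $\Pr[X \le (1-\delta)\mu] \le \of{\frac{e^{-\delta}}{(1-\delta)^{1-\delta}}}^{\mu} = \exp\of{-\mu\bigl((1-\delta)\log(1-\delta) + \delta\bigr)}$. Here the needed inequality $(1-\delta)\log(1-\delta) + \delta \ge \frac{\delta^2}{2}$ for $0 < \delta < 1$ is immediate from the power series: expanding $\log(1-\delta) = -\sum_{k\ge1}\delta^k/k$ and simplifying gives $(1-\delta)\log(1-\delta) + \delta = \sum_{k\ge 2}\frac{\delta^k}{k(k-1)}$, and keeping only the $k=2$ term (all others being nonnegative) yields $\delta^2/2$. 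Since $\Pr[X < (1-\delta)\mu] \le \Pr[X \le (1-\delta)\mu]$, this is the claimed bound.

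The whole argument is routine; the only steps requiring any care are the two elementary real-variable inequalities at the end. The lower-tail one falls out of the power series with no effort, so the main (still very mild) obstacle is checking that $(1+\delta)\log(1+\delta) - \delta \ge \frac{\delta^2}{2+\delta}$ holds for every $\delta > 0$ — the point of the denominator $2+\delta$ rather than $2$ is precisely to keep this valid across the entire range, not just for small $\delta$.
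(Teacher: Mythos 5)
Your proof is correct and is the standard moment-generating-function (Bernstein/Chernoff) argument; the paper states this bound as a known tool and gives no proof of its own, so there is nothing to compare against. One trivial point of care: in the convexity check for the upper tail one gets $h''(\delta)=\frac{1}{1+\delta}-\frac{8}{(2+\delta)^{3}}$, so the denominator to clear is $(1+\delta)(2+\delta)^{3}$ rather than $(1+\delta)(2+\delta)^{2}$, after which the claim reduces to $(2+\delta)^{3}\ge 8(1+\delta)$, i.e.\ $4\delta+6\delta^{2}+\delta^{3}\ge 0$, which is clear.
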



\bigskip

In Section~\ref{sec:warmingup}, we prepare the reader for the proof of the main result, Theorem~\ref{mainthm}, that can be found in Section~\ref{sec:main}. Theorem~\ref{lowerbdthm} is proved in Section~\ref{sec:lowerbdthm} and Theorem~\ref{thm:almostalltrees} in Section~\ref{sec:almostalltrees}. We conclude the paper with some open problems that can be found in Section~\ref{sec:conclusion}.

\section{Warming up before attacking Theorem~\ref{mainthm}}\label{sec:warmingup}


First note that in order to prove Theorem~\ref{mainthm} it is enough to do it for  $p = \frac {1+\eps}{\log 2} \cdot \frac {\log n}{n}$ for an arbitrarily small $\eps > 0$. This follows from that fact that $a_t(G) = 1$ is an increasing graph property (see for example Lemma 1.10 in \cite{JLR}).

\bigskip

In this section, in order to prepare for a delicate and technical argument, we show that the result holds for $p = 2 \sqrt{ \log n / n}$. Let $i$ be the largest integer such that 
$$
2^i \le j = \left\lceil \sqrt{n/2} \right\rceil. 
$$
We construct a tree $T$ rooted at vertex $v$ in the following way. Vertex $v$ has $1+i+j$ children $v_0, v_1, \ldots, v_{i+j}$. Vertex $v_0$ is a leaf, and for every $1 \le \ell \le i$ we have that vertex $v_{\ell}$ has $2^{\ell}-1$ children; all remaining children of $v$ have $j$ children. Since the number of children of vertices $v_1, v_2, \ldots v_{i}$ is at most $2^{i+1} = O(\sqrt{n})$, the number of vertices in $T$ satisfies
\[\frac n2 \le j^2 \le |V(T)| \le j^2 + O(\sqrt{n}) =(1+o(1)) \frac n2. \]
It is straightforward to see that vertices of $T$ can move their weight to the root $v$. Indeed, all grandchildren of $v$ can move their weight to the corresponding parents, and then vertices $v_0, v_1, \ldots, v_i$ can send (one by one) the weight to the root (vertex $v_{\ell}$ sends the weight of $2^{\ell}$, $\ell = 0, 1, \ldots, i$). At that point of the process, $v$ has the weight of $2^{i+1} > j$ so the remaining neighbours can move their weight to $v$. 

Let $v$ be any vertex in $\G(n,p)$. First, we will show that w.h.p.\ there exists a tree $T$ rooted at $v$ that can be embedded in $\G(n,p)$. It follows from Chernoff Bound that w.h.p.\ the degree of $v$ is $(2+o(1)) \sqrt{n \log n}$. We select (arbitrarily) $1+i+j$ neighbours of $v$ and label them as $v_0, v_1, \ldots, v_{i+j}$. We continue discovering neighbours of $v_{\ell}$'s (one by one) but not every neighbour of $v_{\ell}$ will be used for the tree $T$ (there will be more neighbours than children in the corresponding subtree). Since the total number of vertices in $T$ is $(1+o(1))n/2$, there will always be at least $n/3$ vertices left that are not embedded yet.  Hence, the number of neighbours of $v_{\ell}$ that are not embedded yet is a random variable that can be lower bounded by the binomial random variable ${\rm Bin}(n/3,p)$ with expected value of $p (n/3) = (2/3) \sqrt{n \log n}$. Hence, using Chernoff Bound, with probability $1-o(n^{-1})$ there will be enough neighbours of $v_{\ell}$ to continue the process. It follows from the union bound that $T$ can be embedded w.h.p.

As we already mentioned, vertices of $T$ can move their weight to the root $v$. It is enough to show that the remaining vertices can do that too. Let $S$ be the set of neighbours of $v$ that are outside of $T$. Since 
$$
s=|S|= \deg(v) - (1+i+j) = (2+o(1)) \sqrt{n \log n} - O(\sqrt{n}) = (2+o(1)) \sqrt{n \log n},  
$$
there should be enough vertices in $S$ to dominate the rest of the graph and push the remaining weight to $v$. The important observation is that at this point of the process $v$ has weight at least $n/2$ so we do not have to control how much weight we push from a vertex of $S$ to $v$. It remains to show that $S$ dominates the remaining vertices ({\em i.e.} each remaining vertex is adjacent to a vertex in $S$) w.h.p. But this is straightforward to see, since for a given vertex we have that the probability it is not dominated is equal to
$$
(1-p)^s = \exp \left( - (1+o(1))p s \right) = \exp \left( - (4+o(1)) \log n \right) = o(n^{-1}),
$$
and the claim holds by the union bound.

\bigskip

In order to generalize these ideas to sparse graphs we have to deal with a number of problems, each of which is relatively easy to overcome but addressing all of them requires more careful argument. The spirit of the proof however, remains the same: we will define a special rooted tree (recursively) which has the property that all the weight can be moved to the root and we will show that this tree can be embedded in $\G(n,p)$ w.h.p.  This tree will not quite be spanning, but it will contain a set of vertices, $B$, which dominates the remaining vertices, $R$, and can shift their weight to the root as well. Finding a matching from $R$ to $B$ which saturates $R$ completes the proof. There have been numerous results on embedding spanning and almost spanning trees in random graphs \cite{AKS, K10, M14}, but most of these results are for embedding bounded degree trees and are not precise enough for our purposes. Let us list the most important issues and briefly describe the way we want to deal with them. Suppose that $p = \frac {1+\eps}{\log 2} \cdot \frac {\log n}{n}$.

\textbf{Problem~1:}  By Observation~\ref{obs:min_degree}, in order for a vertex to be able to accumulate a weight of $w$, it has to have degree at least $\log_2 w = \frac {1}{\log 2} \log w$. Since the average degree is only $\frac {1+ \eps}{\log 2} \log n$, it follows that (almost) every time a neighbour of the root $v$ sends its weight to $v$, the weight is (almost) doubled. In particular, some children of $v$ must be able to send a large weight to $v$, much more than the number of their children. Hence, we will need to define the tree recursively. As we will see in Corollary \ref{cor:msize}, the tree will reach level $m = (1+o(1)) \frac {\log n}{\log \log n}$.

\textbf{Problem~2:} As we already mentioned, the root and some vertices on top levels must have degrees close to the average degree in the graph. We will require that the number of children for those vertices is (roughly) $\frac{1+\epsilon/ 2}{\log 2} \log n$. However, once a positive fraction of all vertices are already embedded in the tree, the number of available ones drop substantially, so in order to be able to continue the process, we will have to decrease the required number of children to $\beta \log n$ for some $\beta$.  The bottom $\alpha \frac {\log n}{\log \log n}$ levels of the tree will have this property. The number of children on level $k$ will be denoted by $c_{m-k-1}$. (As explained below, it will be more convenient to count levels from the bottom; hence the notation $c_{m-k-1}$ instead of more natural $c_k$.)

\textbf{Problem~3:} Even though the average degree is $\frac {1+ \eps}{\log 2} \log n$, it is possible that a vertex does not have the required number of children (either $\frac{1+\epsilon/ 2}{\log 2} \log n$ or $\beta \log n$). This is not avoidable but rare, and we will show that w.h.p.\ there are at most $\sigma$ children of a given vertex that have this undesired property (we will see in the proof of Lemma \ref{lem:embed} that $\sigma = \Theta(1/\eps^2)$). Nevertheless, we have to take this into account while constructing the tree.

\bigskip

Before defining our tree, we first define a property of rooted trees which (as we will see soon) guarantees that the root can acquire all the weight on the tree, given that each vertex begins with weight 1. 

\begin{definition} [\textbf{Cut-off Property}] Let $T$ be a tree rooted at $r$. We say $T$ has the \textbf{cut-off property} if the following holds: for each vertex $v$ with children $v_1, v_2, \ldots, v_k$, and denoting by $T_i$ the subtree rooted at $v_i$, there exists an $i'$ (which may depend on $v$) so that $|T_i| = 2^{i-1}$ for $i \le i'$, and $|T_i| \le 2^{i'}$ for $i > i'$.

In this case, the vertices $v_i$ for $i \le i'$ are called \textbf{exact}. A vertex with an exact ancestor is called \textbf{tight}, and vertices that are not tight are \textbf{loose}.
\end{definition}

\begin{lemma}
If $T$ is a tree rooted at $r$ which has the cut-off property, then $a_t(T)=1$. In particular, vertex $r$ can acquire all the weight. 
\end{lemma}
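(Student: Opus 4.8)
The plan is to prove the statement by strong induction on $|T|$, showing that the root $r$ can acquire all the weight in $T$. The base case $|T|=1$ is trivial since $r$ already carries all the weight. For the inductive step, let $v = r$ have children $v_1, \ldots, v_k$ with subtrees $T_1, \ldots, T_k$, and let $i'$ be the index guaranteed by the cut-off property, so that $|T_i| = 2^{i-1}$ for $i \le i'$ and $|T_i| \le 2^{i'}$ for $i > i'$. Each subtree $T_i$, rooted at $v_i$, inherits the cut-off property (the defining condition is hereditary, applied to the subtree of $T$ hanging below $v_i$), so by the induction hypothesis the weight of $T_i$ can be consolidated onto $v_i$; perform these moves for every $i$ first, so that afterward each $v_i$ carries weight $|T_i|$ and $v=r$ still carries weight $1$.

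The heart of the argument is then to show $r$ can absorb the $v_i$'s in a valid order. First process the exact children $v_1, \ldots, v_{i'}$ in increasing order of index. Before absorbing $v_i$, the root $r$ carries weight $1 + \sum_{j<i} 2^{j-1} = 1 + (2^{i-1}-1) = 2^{i-1}$, while $v_i$ carries weight $|T_i| = 2^{i-1}$; since the weight on $r$ equals the weight on $v_i$, the total acquisition move moving $v_i$'s weight onto $r$ is legal. After all $i'$ exact children are absorbed, $r$ carries weight $2^{i'}$. Now each remaining child $v_i$ (for $i > i'$) carries weight $|T_i| \le 2^{i'}$, which is at most the weight currently on $r$, so $r$ can absorb them one at a time (the weight on $r$ only increases as we go, so legality is preserved throughout). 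After all children are absorbed, $r$ carries the full weight $|T|$ and no positive weight remains elsewhere, so $a_t(T) = 1$.

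I do not anticipate a serious obstacle here; the only point requiring a little care is the ordering and the running tally on $r$, which must be checked to confirm that at each step the move is legal (weight on the recipient at least the weight on the donor). The key observations are that the exact subtrees have sizes that are exactly the consecutive powers $2^0, 2^1, \ldots, 2^{i'-1}$, so their partial sums plus the root's initial unit telescope precisely to the next power of two, keeping each move at equality; and that once $r$ has reached weight $2^{i'}$ it dominates every remaining subtree. One should also note explicitly that the cut-off property is preserved under passing to subtrees rooted at children, so the induction hypothesis applies — this is immediate from the definition since every vertex of $T_i$, together with its children in $T_i$, is also a vertex of $T$ with the same children.
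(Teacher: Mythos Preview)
Your proof is correct and follows essentially the same approach as the paper's: induct (you use $|T|$, the paper uses depth, an immaterial difference), observe that each subtree $T_i$ inherits the cut-off property so the weight can be gathered at $v_i$, and then absorb the children into $r$ in increasing index order. You spell out the running tally $1+\sum_{j<i}2^{j-1}=2^{i-1}$ that justifies each move, which the paper leaves as ``easy to see,'' but the argument is the same.
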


\begin{proof}
We proceed by induction on the depth of $T$. The base case (depth $0$) is trivial. To see the induction step, let $r$ have children $v_1, v_2, \ldots, v_k$ and let $T_i$ be the subtree of $T$ rooted at the $v_i$. Then the $T_i$ inherit the cut-off property, and all have depth strictly less than the depth of $T$, and so by induction, all the weight from subtree $T_i$ can be loaded onto the root $v_i$. 

Now it is easy to see that by the cut-off property, $r$ may acquire the weight of each child $v_i$, going in order of increasing index. 
\end{proof}

It is time to define our recursive construction of a tree which we will have  the Cut-Off Property.
\begin{definition}\label{def:tree}
For any $\rho, m, \sigma \in \N$, and positive integer sequence $c_{m-k-1}$, construct the rooted tree $T_{\rho}$ by the following process:
\begin{enumerate}
\item Initialize: The \textbf{root} vertex $r = \angs{}$,  the \textbf{weight} $w(r) = \rho$, the \textbf{level} $k=0$. 
\item Iterate: In level $k$, if vertex $\angs{i_1, i_2, \ldots,i_k}$ has weight $w = w(\angs{i_1, i_2, \ldots i_k}) > 1$,
\begin{enumerate}
\item If $1 < w \le c_{m-k-1}$, then attach $w-1$ leaves to vertex $\angs{i_1, i_2, \ldots,i_k}$ each with weight 1.
\item  If $w > c_{m-k-1}$, then attach
  $c:=c_{m-k-1}$ children to vertex $\angs{i_1, i_2, \ldots,i_k}$, labelled $\angs{i_1, i_2, \ldots,i_k,1}, \ldots, \angs{i_1, i_2, \ldots, i_k, c}$.   Let $i'$ be the minimum integer $i \ge 0$ such that $$\displaystyle \frac{w- 2^i - \sigma }{c-i- \sigma} \le 2^i+ \sigma  .$$ Assign weights to the children as follows 
\begin{equation}\label{eq:wdistrib}
w(\angs{i_1, i_2, \ldots,i_{k+1}}) =
\begin{cases} 
2^{i_{k+1} -1} &\textrm{ if }i_{k+1} \le i' \\
1 &\textrm{ if } i' < i_{k+1} \le i'+\sigma \\
\frac{w- 2^{i'} - \sigma }{c-i'- \sigma} &otherwise.
\end{cases}
\end{equation}
Here, we assume that $\frac{w- 2^{i'} - \sigma }{c-i'- \sigma}$ is an integer and that $i' + \sigma < c$ so that $i'$ is well defined.  
\end{enumerate}
\end{enumerate}
\end{definition}

\begin{center}
\includegraphics[scale=.8]{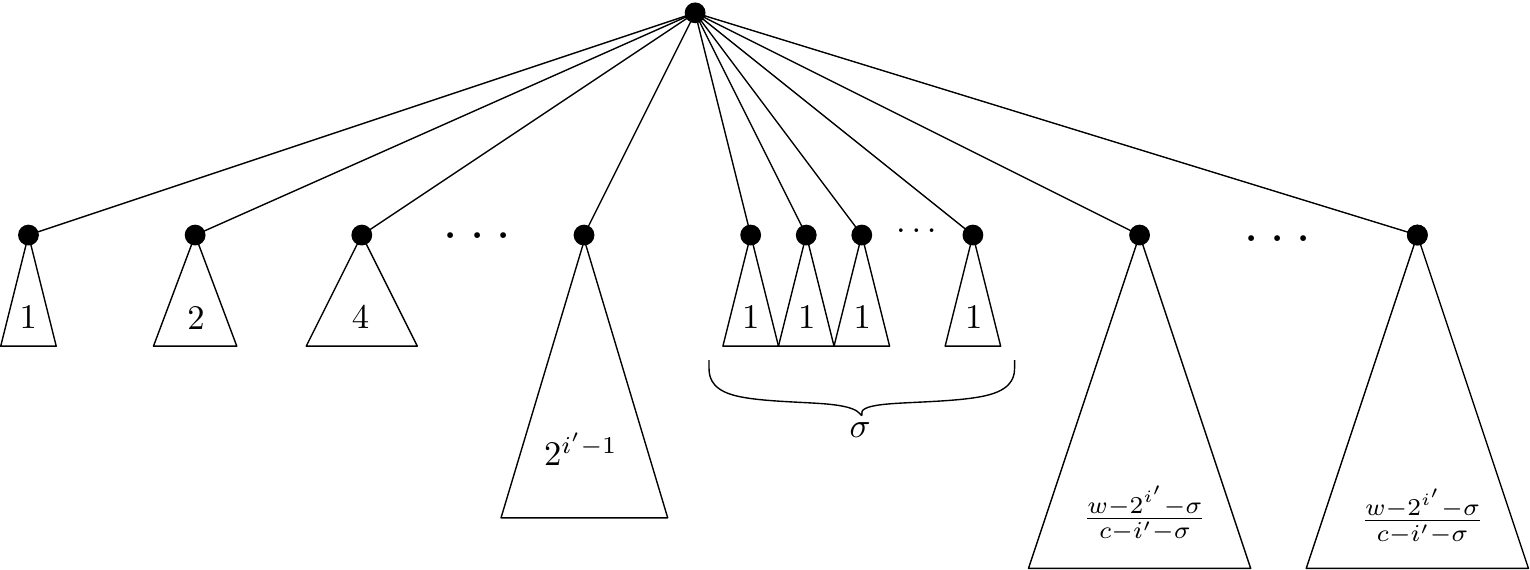}
\end{center}

Note that the tree $T_\rho$ has the Cut-Off Property. In this definition, $w(v)$ is meant to represent the number of vertices which will end up in the subtree rooted at $v$. The sequence $c$ provides a sort of threshold for the recursive part of the definition to come into play. So if the weight on $v$ is at most $c$, then the entire subtree appears in the form of leaves. If the weight on $v$ exceeds $c$, then $v$ will have exactly $c$ children and the weight is distributed according to \eqref{eq:wdistrib}. 

\bigskip

\textbf{Problem~4:} In the recursive definition of the tree, we distribute the remaining weight equally among some number of vertices.  Hence, we will have to make sure that certain divisibility conditions hold. Unfortunately, it is difficult to find the initial weight $\rho$ that does it. Hence, in order to do that, we first define $c^*_{k}$ to be the desired number of children of vertices on level $k+1$ (this time, counted from the bottom), and then assign weights starting from the bottom level and doing calculations upwards. This issue is addressed by Definition \ref{def:sequence}.

\textbf{Problem~5:} Our goal will be to construct a tree that consists of $(\frac 85+o(1))n$ vertices, that is, a tree $T_\rho$ with $\rho = (\frac 85+o(1))n$. (Of course, $\G(n,p)$ has only $n$ vertices; $T_\rho$ is an abstract tree that will be ``trimmed'' before embedding it into a random graph.) As we already mentioned, for a fixed sequence of $c^*_{k}$'s and $\sigma$, one can easily (recursively) calculate the weight $\rho_{k}$ of loose vertices on level $k$ (again, counted from the bottom), and the weight of the root $\rho = \rho_{m}$. However, it is hard to expect that the desired condition holds, namely, that $\rho_m = (\frac 85+o(1))n$. In order to solve this problem we start with any sequence $c^*_k$, take $m$ to be the largest integer such that $\rho_m \le \frac 85 n$, and then modify the sequence slightly to get the desired sequence $c_k$ with $\rho_m =  (\frac 85+o(1))n$. Let us note that a non-constructive argument is used here that shows only the existence; the sequence $c_k$ is not explicitly defined. Lemmas \ref{lem:rhoj} and \ref{lem:i'} provide useful relationships between the sequences $c$ and $\rho$ which aid in the proof of Lemma \ref{lem:adjustc} which proves the existence of the desired sequence $c$.

\textbf{Problem~6:} It is difficult to expect that a given tree on $n$ vertices can be embedded in a random graph. Hence, we are going to remove a number of leaves in $T_\rho$ to get another tree $T_\rho'$ on $(\frac 45+o(1))n$ vertices that can be embedded in $\G(n,p)$ w.h.p. The important property will be that parents of removed leaves can not only dominate the remaining $(\frac 15+o(1))n$ vertices but also can push all the weight to the root. This issue is addressed by Definition \ref{def:bereft} and Lemma \ref{lem:mostlybereft}.

\section{Proof of Theorem~\ref{mainthm}}\label{sec:main}

Set $d= \frac{1+ \eps}{\log 2}  \log n$ and let $\sigma, \b, \alpha$ be constants. 
\begin{definition}\label{def:sequence}
Let \[c_j ^* := \begin{cases} 
\b \log n  &\textrm{ if } j \le \alpha \frac{\log n}{\log \log n} \\
  \frac{1+\frac{ \eps}{2}}{\log 2} \log n&otherwise.
 \end{cases}\]
and let $c$ be a sequence such that $c_j ^* \le c_j \le c_j ^* (1+o(1))$. Define the function 
$$
i^*(x):= 
\begin{cases}
0 &\textrm{ if }x \le \sigma \\
\lceil \log_2 (x - \sigma) \rceil &\textrm{ otherwise. } 
\end{cases}
$$
Finally, define sequences $\rho_1, \rho_2, \ldots$ and $b_1, b_2, \ldots$ recursively by putting $\rho_1 :=2$ and 
$$
\rho_{j+1} := \sigma + 2^{i^*(\rho_j)}  +  \Big( c_j - i^*(\rho_j) - \sigma \Big) \cdot  \rho_j,
$$
$b_1:=1$ and  
$$
b_{j+1} :=  \Big( c_j - i^*(\rho_j) - \sigma \Big)\cdot b_j.
$$ 
\end{definition}
 
Note that the sequences $\rho, b$ depend on our choice of the sequence $c$ (we assume that constants $\eps, \sigma, \b, \alpha$ are fixed in advance). As was mentioned before (see Problem~4), the main purpose of this recursive sequence is to calculate (for a given sequence $c$ and depth $m$) the weight of the root; in fact, $\rho_j$ is the weight of each loose vertex at level $j$ (counted from the bottom) so the weight of the whole tree is $\rho_m$. Let us also mention that the purpose of $i'$ in Definition~\ref{def:tree} was to make sure that the total weight of subtrees rooted at exact vertices together with the weight of the root is at least the weight of each subtree rooted at non-exact children. It is straightforward to see that $i^*$ in Definition~\ref{def:sequence} has the same purpose and so these values are always the same. Finally, let us point out that we fix $\rho_1 = 2$ which indicates that every loose vertex at the level directly above the bottom has precisely one leaf. These leaves will play an important role in our argument and $b_j$ counts how many such leaves we have in the tree rooted at loose vertex at level $j$ (as usual, counted from the bottom).

Let $\rho^*$ be the sequence corresponding to $c^*$. Let  $m$ be the largest integer so that   $\rho^*_m \le \frac{8}{5} n$. Note that for every  $j \ge 2$, $\rho_j \ge \rho_2 = \Omega(\log n)$ and so
\begin{equation}\label{eq:nice_recurrence} 
\rho_{j+1} = \Big( c_j - \log_2 \rho_j + O(1) \Big) \cdot  \rho_j \le c_j \rho_j.
\end{equation}
It follows that $\rho^*_m = \Omega \of{\frac{n}{\log n}}$, since $\rho$ grows by at most a log factor each time. 

\bigskip
 
Henceforth we will keep $m = m(n)$ as defined above \eqref{eq:nice_recurrence} and  consider the sequences $\rho, b$ only up to the terms $\rho_m, b_m$. We will consider sequences $c$ with terms that might be larger than those of $c^*$. As a result, $\rho_j \ge \rho^*_j$ for all $j$. However, we will only consider sequences $c$ such that the corresponding sequence $\rho$ has $\rho_m = n^{1+o(1)}$.  

\begin{lemma} \label{lem:rhoj}
Let $c$ be any sequence such that $c^*_j \le c_j \le c^*_j (1+o(1))$ and the $\rho$-sequence corresponding to $c$ has $\rho_m = n^{1+o(1)}$. If  $\alpha < \frac{\b \log 2}{2}$ then 
$$
\rho_j = \exp \{ (j-1) \log \log n + O(j) \}
$$ 
for all $2 \le j \le m$. 
\end{lemma}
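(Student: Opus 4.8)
The plan is to unravel the recursion for $\rho_j$ directly. From Definition~\ref{def:sequence} we have $\rho_{j+1} = \bigl(c_j - i^*(\rho_j) - \sigma\bigr)\rho_j + 2^{i^*(\rho_j)} + \sigma$, and since $\rho_j = \Omega(\log n)$ for $j \ge 2$ (so that $2^{i^*(\rho_j)} + \sigma = O(\rho_j)$ and $i^*(\rho_j) = \log_2\rho_j + O(1)$), equation~\eqref{eq:nice_recurrence} already gives us the clean multiplicative form $\rho_{j+1} = \bigl(c_j - \log_2\rho_j + O(1)\bigr)\rho_j$. The claimed bound $\rho_j = \exp\{(j-1)\log\log n + O(j)\}$ is what one gets by pretending each step multiplies by roughly $\log n$ (up to a constant factor and a lower-order $\log_2\rho_j$ correction), so the whole content of the lemma is controlling the accumulated error from that $\log_2\rho_j$ term and from the two regimes of $c_j^*$.

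First I would set $L = \log\log n$ and prove the statement by induction on $j$, with a sufficiently generous but fixed constant hidden in the $O(j)$ term, say proving $|\log\rho_j - (j-1)L| \le Kj$ for a constant $K$ to be chosen. The base case $j=2$ is immediate: $\rho_2 = \bigl(c_1 - i^*(\rho_1) - \sigma\bigr)\cdot 2 + 2^{i^*(2)} + \sigma = \Theta(\log n)$, so $\log\rho_2 = L + O(1)$. For the inductive step, assuming the bound at level $j$, I would take logarithms in $\rho_{j+1} = \bigl(c_j - \log_2\rho_j + O(1)\bigr)\rho_j$ to get $\log\rho_{j+1} = \log\rho_j + \log\bigl(c_j - \log_2\rho_j + O(1)\bigr)$. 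Here $\log_2\rho_j = \frac{1}{\log 2}\log\rho_j \le \frac{1}{\log 2}\bigl((j-1)L + Kj\bigr)$, so I need to know that $c_j$ dominates $\log_2\rho_j$ comfortably. In the top regime $c_j \ge c_j^* = \frac{1+\eps/2}{\log 2}\log n$, which is of order $\log n \gg \log_2\rho_j$ as long as $j = O(L^{-1}\log n)$, so $\log(c_j - \log_2\rho_j + O(1)) = \log\log n + O(1) = L + O(1)$, giving $\log\rho_{j+1} = \log\rho_j + L + O(1)$ and the induction goes through there. The delicate regime is the bottom $\alpha\frac{\log n}{\log\log n}$ levels, where $c_j = \Theta(\b\log n)$ is still of order $\log n$, hence still $\gg \log_2\rho_j$ provided $\log_2\rho_j = o(\log n)$, i.e. provided $j L = o(\log n)$; since those levels have $j \le \alpha\log n/\log\log n$, we get $jL \le \alpha\log n = O(\log n)$, and the hypothesis $\alpha < \frac{\b\log 2}{2}$ is exactly what guarantees $\log_2\rho_j \le \frac{1}{\log 2}\cdot \alpha\log n \cdot(1+o(1)) + \dots < \frac{\b}{2}\log n(1+o(1))$, so $c_j - \log_2\rho_j \ge \frac{\b}{2}\log n$ stays positive and of order $\log n$, keeping $\log(c_j - \log_2\rho_j + O(1)) = L + O(1)$.

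The main obstacle — and the reason the constant $\alpha < \frac{\b\log 2}{2}$ appears — is precisely ensuring the factor $c_j - i^*(\rho_j) - \sigma$ remains a positive quantity of order $\log n$ throughout all $m = \Theta(\log n/\log\log n)$ levels, including the bottom $\alpha\log n/\log\log n$ levels where $c_j$ is only $\b\log n$ rather than $\frac{1+\eps/2}{\log 2}\log n$. I would organize this by first establishing a rough a priori bound $\rho_j \le n^{1+o(1)}$ for all $j \le m$ (which is given to us by hypothesis, and also follows from \eqref{eq:nice_recurrence} since $\rho$ grows by at most a $\log$ factor per step over $O(\log n/\log\log n)$ steps), so that $\log_2\rho_j \le (1+o(1))\log_2 n = (1+o(1))\frac{\log n}{\log 2}$; but this crude bound is not quite strong enough at the bottom, so I would instead carry the inductive bound $\log\rho_j \le (j-1)L + Kj$ and note that at level $j \le \alpha\log n/L$ this gives $\log_2\rho_j \le \frac{1}{\log 2}(\alpha\log n + K\alpha\log n/L) = \frac{\alpha}{\log 2}\log n(1+o(1))$, which is strictly less than $\frac{\b}{2}\log n < \b\log n \le c_j$ for large $n$ by the assumption on $\alpha$. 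With the factor $c_j - \log_2\rho_j + O(1)$ thus pinned between two constant multiples of $\log n$ at every level, each step contributes $L + O(1)$ to $\log\rho_j$, the errors add up to $O(j)$ over $j$ levels, and the induction closes with $K$ any constant exceeding the per-step additive error.
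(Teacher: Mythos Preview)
Your approach is essentially the paper's: an easy upper bound from $\rho_{j+1}/\rho_j \le c_j = O(\log n)$, and a lower bound obtained by showing $c_j - \log_2\rho_j = \Omega(\log n)$ in each of the two regimes (using the already-established upper bound on $\rho_j$ for $j \le \alpha\frac{\log n}{\log\log n}$ together with $\alpha < \frac{\b\log 2}{2}$, and using $\rho_j \le \rho_m = n^{1+o(1)}$ for larger $j$). The only slip is in your second paragraph, where you claim $c_j \gg \log_2\rho_j$ in the top regime; for $j$ near $m$ these are in fact comparable, and the correct statement---which you do give in your third paragraph---is that their \emph{difference} $c_j - \log_2\rho_j \ge \frac{1+\eps/2}{\log 2}\log n - \frac{1+o(1)}{\log 2}\log n$ is still $\Omega(\log n)$.
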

\begin{proof}
Clearly, $\rho_2 = \Theta(\log n)$ and it follows immediately from~(\ref{eq:nice_recurrence}) that for every  $2 \le j < m$ we have
$$
\frac{\rho_{j+1}}{\rho_j} \le c_j \le 2 c^*_j \le 2 \max \left( \b, \frac{1+\frac{ \eps}{2}}{\log 2} \right) \log n.
$$
Hence, $\rho_j \le \exp \{ (j-1) \log \log n + O(j) \}$ for all $j \le m$ and so the upper bound holds. In particular, as long as $j \le \alpha \frac{ \log n}{\log \log n}$ we have 
\begin{equation}\label{eq:rho_alpha}
\rho_j \le \rho_{\alpha \frac{ \log n}{\log \log n}} \le n^{\alpha \cdot(1+o(1))}.
\end{equation}

For the lower bound, we use~(\ref{eq:nice_recurrence}) one more time and~(\ref{eq:rho_alpha}) to observe that for every $j$ such that  $2 \le j \le \alpha \frac{ \log n}{\log \log n}$ we have 
$$
\frac{\rho_{j+1}}{\rho_j} \ge \left( \beta (1+o(1)) - \frac {\alpha}{\log 2} (1+o(1)) \right) \log n \ge \frac {\beta}{2} \log n
$$
(by our choice of $\alpha$). Since for every $j \le m$ we have $\rho_j \le \rho_m = n^{1+o(1)}$, for every $j$ such that $\alpha \frac{ \log n}{\log \log n} < j <m$ we have 
$$
\frac{\rho_{j+1}}{\rho_j} \ge \left( \frac{1+\frac{ \eps}{2}+o(1)}{\log 2} - \frac{1+o(1)}{\log 2} \right) \log n \ge \frac {\eps}{4 \log 2} \log n.
$$
It follows that $\rho_j \ge \exp \{ (j-1) \log \log n + O(j) \}$ for all $j \le m$ and the proof is finished.
\end{proof}
Henceforth, we assume that $\alpha < \frac{\beta \log 2}{2}$.
We immediately get the following corollary.

\begin{corollary} \label{cor:msize}
$m = (1+o(1)) \frac{\log n}{ \log \log n}$. 
\end{corollary}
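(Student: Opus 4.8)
The plan is to read off the estimate for $m$ directly from Lemma~\ref{lem:rhoj} together with the defining property of $m$, namely that $m$ is the largest integer with $\rho^*_m \le \frac{8}{5} n$. Since $c^*$ trivially satisfies the hypotheses of Lemma~\ref{lem:rhoj} (with $c_j = c^*_j$, so $\rho = \rho^*$; and indeed $\rho^*_m = \Omega(n/\log n) \le \frac85 n$ forces $\rho^*_m = n^{1+o(1)}$, since $\rho$ grows by at most a log factor per step), the lemma gives
\[
\rho^*_m = \exp\{(m-1)\log\log n + O(m)\}.
\]

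Next I would combine this with the two inequalities that pin $m$ down. On one hand, $\rho^*_m \le \frac85 n$, so $(m-1)\log\log n + O(m) \le \log n + O(1)$, which yields $m \le (1+o(1))\frac{\log n}{\log\log n}$. On the other hand, by maximality of $m$ we have $\rho^*_{m+1} > \frac85 n$, and since $\rho^*_{m+1} = \exp\{m\log\log n + O(m)\}$ (again by Lemma~\ref{lem:rhoj}, noting $m+1 \le m(1+o(1))$ so the same asymptotic form applies — or one simply uses $\rho^*_{m+1} \le c^*_m \rho^*_m$ together with the lower bound form of the lemma), we get $m\log\log n + O(m) > \log n + O(1)$, hence $m \ge (1-o(1))\frac{\log n}{\log\log n}$. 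Dividing the error term $O(m)$ by $m \log\log n$ shows it contributes only a $1 + O(1/\log\log n) = 1 + o(1)$ factor, so the two bounds together give $m = (1+o(1))\frac{\log n}{\log\log n}$.

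The only mild subtlety — and the closest thing to an obstacle — is making sure Lemma~\ref{lem:rhoj} is actually applicable at the index $m$ itself (and at $m+1$): the lemma is stated for $2 \le j \le m$ with the running hypothesis $\alpha < \frac{\beta\log 2}{2}$, which we have adopted, and it requires $\rho_m = n^{1+o(1)}$, which holds for $\rho^*$ as noted above. For the lower bound one needs $\rho^*_{m+1}$, so I would either invoke the lemma with $m$ replaced by $m+1$ (legitimate since $\rho^*_{m+1} = O(n\log n) = n^{1+o(1)}$ too), or, more cleanly, just use $\rho^*_{m+1} > \frac85 n$ together with $\rho^*_{m+1} \le c^*_m \rho^*_m \le (\log n)^{O(1)} \rho^*_m$ to deduce $\rho^*_m \ge \frac{8n}{5(\log n)^{O(1)}} = n^{1-o(1)}$, and then apply the lemma's lower bound $\rho^*_m \ge \exp\{(m-1)\log\log n + O(m)\}$ in reverse. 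Either route is a one-line computation, so the corollary follows immediately.
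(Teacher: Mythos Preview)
Your proposal is correct and is exactly the argument the paper intends: the corollary is stated as an immediate consequence of Lemma~\ref{lem:rhoj}, and you carry out precisely that deduction, plugging $\rho^*_m = n^{1+o(1)}$ (which follows from $\Omega(n/\log n) \le \rho^*_m \le \tfrac85 n$) into the estimate $\rho^*_j = \exp\{(j-1)\log\log n + O(j)\}$ and solving for $m$. There is nothing to add.
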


In Definition~\ref{def:tree}, it was assumed that $i'$ was well defined, that is, that the condition $i' + \sigma < c$ holds. Because of the relationship between the two definitions, this condition is equivalent to the condition $c_j - i^*(\rho_j) - \sigma \ge 1$ in Definition~\ref{def:sequence}. In the next lemma, we show that the same condition for $\alpha$ as in previous lemmas is enough to guarantee that $i'$ is well defined.
The following is a useful property which we will use in the next few arguments.
\begin{lemma}\label{lem:i'}
We have that $i'$ always exists. In fact, the following stronger property holds: for every $j$ such that  $1 \le j \le m$ we have 
$$
c_j - \log_2 \rho_j = \Omega(\log n).
$$
\end{lemma}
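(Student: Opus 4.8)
The plan is to prove both claims together, deriving the weaker ``$i'$ always exists'' from the quantitative bound $c_j - \log_2 \rho_j = \Omega(\log n)$. Recall that by the correspondence between Definitions~\ref{def:tree} and~\ref{def:sequence}, the statement that $i'$ is well defined for a loose vertex at level $j$ is exactly the inequality $c_j - i^*(\rho_j) - \sigma \ge 1$. Since $\sigma$ is a fixed constant and $i^*(\rho_j) = \lceil \log_2(\rho_j - \sigma)\rceil \le \log_2 \rho_j + O(1)$, it suffices to show $c_j - \log_2 \rho_j = \Omega(\log n)$, as then $c_j - i^*(\rho_j) - \sigma \ge c_j - \log_2 \rho_j - O(1) = \Omega(\log n) \ge 1$ for $n$ large.

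So I would focus entirely on the lower bound $c_j - \log_2 \rho_j = \Omega(\log n)$ for all $1 \le j \le m$. The input is Lemma~\ref{lem:rhoj}, which under the standing assumption $\alpha < \frac{\beta \log 2}{2}$ gives $\rho_j = \exp\{(j-1)\log\log n + O(j)\}$ for $2 \le j \le m$; hence $\log_2 \rho_j = \frac{1}{\log 2}\big((j-1)\log\log n + O(j)\big)$. I would split into the two regimes matching the definition of $c^*$. For $j \le \alpha \frac{\log n}{\log\log n}$ we have $c_j \ge c_j^* = \beta \log n$, while $\log_2 \rho_j \le \frac{1}{\log 2}\big(\alpha \frac{\log n}{\log\log n} \cdot \log\log n + O(j)\big) = \frac{\alpha}{\log 2}\log n + O(\log n / \log\log n) = \frac{\alpha}{\log 2}\log n + o(\log n)$; so $c_j - \log_2 \rho_j \ge (\beta - \frac{\alpha}{\log 2} + o(1))\log n$, which is $\Omega(\log n)$ precisely because $\alpha < \frac{\beta\log 2}{2} < \beta \log 2$. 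For $\alpha \frac{\log n}{\log\log n} < j \le m$ we have $c_j \ge c_j^* = \frac{1 + \eps/2}{\log 2}\log n$, while by Corollary~\ref{cor:msize} and $\rho_j \le \rho_m = n^{1+o(1)}$ we get $\log_2 \rho_j \le \frac{1+o(1)}{\log 2}\log n$; therefore $c_j - \log_2 \rho_j \ge \big(\frac{1+\eps/2}{\log 2} - \frac{1+o(1)}{\log 2}\big)\log n \ge \frac{\eps}{4\log 2}\log n = \Omega(\log n)$. Finally the edge case $j=1$: here $\rho_1 = 2$ so $\log_2 \rho_1 = 1 = O(1)$ and $c_1 \ge \beta\log n$, so the bound is trivial.

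The main obstacle, such as it is, is bookkeeping rather than any real difficulty: I must be careful that the $O(j)$ error term in Lemma~\ref{lem:rhoj} is genuinely $o(\log n)$ throughout the range $j \le m = (1+o(1))\frac{\log n}{\log\log n}$, so that it does not swamp the gap between $c_j$ and $\log_2 \rho_j$ in the first regime — indeed $O(j) = O(\log n/\log\log n) = o(\log n)$, so this is fine. The only genuinely delicate point is that in the first regime the leading constants are $\beta$ versus $\frac{\alpha}{\log 2}$, and we need strict separation; this is exactly what the hypothesis $\alpha < \frac{\beta\log 2}{2}$ buys us (with room to spare, since we only need $\alpha < \beta\log 2$ here — the factor of $2$ was needed for the two-sided estimate in Lemma~\ref{lem:rhoj}). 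Assembling the three cases gives $c_j - \log_2 \rho_j = \Omega(\log n)$ uniformly, and the existence of $i'$ follows as noted above.
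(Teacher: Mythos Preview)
Your proof is correct and follows essentially the same approach as the paper: reduce the existence of $i'$ to the bound $c_j - \log_2 \rho_j = \Omega(\log n)$, then split into the two regimes $j \le \alpha\frac{\log n}{\log\log n}$ and $j > \alpha\frac{\log n}{\log\log n}$, bounding $\log_2\rho_j$ above via Lemma~\ref{lem:rhoj} (equivalently $\rho_j \le \rho_{\alpha\log n/\log\log n} \le n^{\alpha(1+o(1))}$ and $\rho_j \le \rho_m = n^{1+o(1)}$, respectively) and $c_j$ below by $c_j^*$. The only cosmetic difference is that you treat $j=1$ as a separate trivial case and invoke the explicit formula from Lemma~\ref{lem:rhoj} rather than the monotonicity of $\rho$, but the substance is identical.
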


\begin{proof}
In order to show that $i'$ exists, we will show that the equivalent condition that $c_j - i^*(\rho_j) - \sigma \ge 1$ in Definition~\ref{def:sequence} holds. In fact, we will show something stronger, namely, that $c_j - \log_2 \rho_j = \Omega(\log n)$ for every  $1 \le j \le m$. For $j \le \alpha \frac{ \log n}{\log \log n}$ we have 
$$
c_j - \log_2 \rho_j \ge c_j - \log_2 \rho_{\alpha \frac{ \log n}{\log \log n}} = (1+o(1)) \left( \beta - \frac {\alpha}{\log 2} \right) \log n = \Omega(\log n).
$$
If $j > \alpha \frac{ \log n}{\log \log n}$, then
$$
c_j - \log_2 \rho_j \ge c_j - \log_2 \rho_m = (1+o(1)) \left( \frac{1+\frac{ \eps}{2}}{\log 2}  - \frac {1}{\log 2} \right) \log n = \Omega(\log n).
$$
The proof is complete.
\end{proof}

Our next task is to show that one can adjust a sequence $c^*$ slightly to get another sequence $c$ with $\rho_m = \of{\frac{8}{5} + o(1) } n$.

\begin{lemma}\label{lem:adjustc}
There exists a sequence of integers $c_j$ with $c_j ^* \le c_j \le c_j ^* (1+o(1))$ and $\rho_m = \of{\frac{8}{5} + o(1) } n $.
\end{lemma}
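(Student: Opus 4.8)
The plan is to start from the baseline sequence $c^*$ and its associated weight sequence $\rho^*$, and then increase the terms $c_j$ one at a time (staying within the allowed window $c^*_j \le c_j \le c^*_j(1+o(1))$) until the final weight $\rho_m$ crosses into the target window $\of{\frac 85 + o(1)}n$. The key quantitative facts we need are already in hand: by the choice of $m$ we have $\rho^*_m \le \frac 85 n$, and by the remark following \eqref{eq:nice_recurrence} we have $\rho^*_m = \Omega(n/\log n)$, so the starting point is already polynomially close (in fact only a $\log n$-factor away) from the target. Since the property $a_t(G)=1$ and all the later lemmas only ever need $\rho_m = n^{1+o(1)}$, any sequence produced by a bounded number of such adjustments stays in scope, and in particular Lemmas~\ref{lem:rhoj} and~\ref{lem:i'} continue to apply.

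First I would quantify how sensitive $\rho_m$ is to a single bump. Suppose we replace $c_j$ by $c_j + 1$ for one index $j$ with $\alpha\frac{\log n}{\log\log n} < j < m$ (a ``top level'', where $c^*_j = \frac{1+\eps/2}{\log 2}\log n$, so there is room to add up to roughly $\frac{\eps/2}{\log 2}\log n$ before violating $c_j \le c^*_j(1+o(1))$ — actually adding even $\Theta(\log n)$ keeps us inside the window). From the recursion $\rho_{j+1} = \sigma + 2^{i^*(\rho_j)} + (c_j - i^*(\rho_j) - \sigma)\rho_j$ one sees that incrementing $c_j$ by $1$ increases $\rho_{j+1}$ by exactly $\rho_j$, i.e. by a multiplicative factor $1 + \frac{\rho_j}{\rho_{j+1}} = 1 + \frac{1}{c_j - \log_2\rho_j + O(1)}$; by Lemma~\ref{lem:i'} this factor is $1 + \Theta(1/\log n)$. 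This change then propagates multiplicatively through levels $j+1, \ldots, m$ (each subsequent $\rho_{k+1}$ depends on $\rho_k$ essentially linearly, with a correction of lower order from the $2^{i^*}$ and $\sigma$ terms and from the $\log_2\rho_k$ term, which shifts by $O(1/\log n)$), so the net effect on $\rho_m$ is to multiply it by $1 + \Theta(1/\log n)$. Hence one bump at a top level multiplies $\rho_m$ by a factor bounded away from $1$ by $\Theta(1/\log n)$ but also bounded above by $1 + o(1)$.

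Now I would run the following greedy/discrete intermediate-value argument. Process the top-level indices $j$ from $m-1$ downward; at each one, increase $c_j$ by $1$ repeatedly, recomputing $\rho_m$ after each increment, stopping the first time $\rho_m$ would exceed $\frac 85 n$. Since each increment multiplies $\rho_m$ by a factor in the interval $[1 + \Theta(1/\log n),\, 1 + o(1)]$, the sequence of attained values $\rho_m$ forms a chain whose consecutive ratios are $1 + o(1)$; therefore it cannot ``jump over'' the target window — when it first exceeds $\frac 85 n$ it does so by only a $(1+o(1))$ factor, giving $\rho_m = \of{\frac 85 + o(1)}n$ as required. It remains only to check we do not run out of room before reaching $\frac 85 n$: starting from $\rho^*_m = \Omega(n/\log n)$, reaching a value $\ge \frac 85 n$ requires multiplying by $O(\log n)$, i.e. $O(\log^2 n)$ increments in the worst case (each contributing a factor $\ge 1 + \Omega(1/\log n)$); and a single top level $j$ can absorb $\Theta(\log n)$ increments while keeping $c_j \le c^*_j(1+o(1))$, while there are $\Theta(\log n/\log\log n)$ top levels available, so the budget $\Theta(\log^2 n/\log\log n)$ of available increments comfortably exceeds what is needed. (If one prefers, spreading at most one or two increments over each of the $\Theta(\log n/\log\log n)$ top levels, or instead bumping a single bottom level $j \le \alpha\frac{\log n}{\log\log n}$ where $c^*_j = \b\log n$ has even more slack, also works; the divisibility caveat in Definition~\ref{def:tree} is handled by the standing ``$c_j \le c^*_j(1+o(1))$'' freedom, rounding each $\rho_j$ up to a multiple of $c_j - i^*(\rho_j) - \sigma$ at a negligible multiplicative cost, which is already baked into the $(1+o(1))$ factors.)

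The main obstacle — and the only place the argument needs genuine care rather than bookkeeping — is verifying that a single increment really does move $\rho_m$ by a factor that is simultaneously $\ge 1 + \Omega(1/\log n)$ (so that we are guaranteed to reach the target with a bounded budget of increments) and $\le 1 + o(1)$ (so that we cannot overshoot past the window). Both bounds come from Lemma~\ref{lem:i'}, which pins $c_j - \log_2\rho_j$ to $\Theta(\log n)$, together with the observation that the dependence of $\rho_{k+1}$ on $\rho_k$ is linear up to additive terms ($\sigma + 2^{i^*(\rho_k)}$) that are $o(\rho_k/\log n)$ and a multiplicative correction from $i^*(\rho_k) = \log_2\rho_k + O(1)$ that changes by $O(1/\log n)$ when $\rho_k$ is perturbed by a $(1+\Theta(1/\log n))$ factor — so the perturbation neither decays nor blows up as it propagates up the $m = (1+o(1))\frac{\log n}{\log\log n}$ levels. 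Once this stability estimate is in place, the discrete intermediate value step is immediate.
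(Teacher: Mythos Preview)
Your approach is essentially the same as the paper's: start from $c^*$, increment single terms by $1$, and use a discrete intermediate-value argument to land in the window $(\frac85+o(1))n$. Both arguments rely on Lemma~\ref{lem:i'} to show that a single increment changes $\rho_m$ by only a $1+o(1)$ factor, so one cannot jump over the target window.

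The one genuine difference is in how ``reachability'' is established. The paper does not prove a lower bound on the effect of a single increment; instead it exhibits a concrete overshooting sequence $\tilde c_j = c_j^* + \frac{\log n}{\log\log\log n}$ and shows directly that the resulting $\tilde\rho_m \gg n$. You instead argue that each increment multiplies $\rho_m$ by at least $1+\Omega(1/\log n)$ and then do a budget count. Your route is valid, but the lower-bound-per-increment is precisely the step you treat most loosely (``the perturbation neither decays nor blows up''): one has to check that the additive difference $D_j := \tilde\rho_j - \rho_j$ satisfies $D_{j+1} \ge (c_j - i^*(\rho_j) - \sigma - 1)D_j - \sigma$ even when $i^*(\tilde\rho_j) = i^*(\rho_j)+1$, and that the accumulated $-\sigma$ corrections over $m$ levels do not swamp the initial $D_{j_0+1}=\rho_{j_0}$. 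This goes through (the $D_j$ grow by a $\Theta(\log n)$ factor each step while the corrections are bounded), but it is the one place your write-up would benefit from an explicit line or two. The paper's overshoot-sequence argument sidesteps this by never needing a per-increment lower bound. Also, a small slip: you say a top level can absorb ``$\Theta(\log n)$'' increments while keeping $c_j \le c_j^*(1+o(1))$; strictly you only get $o(\log n)$ per level, but the resulting budget $o(\log^2 n/\log\log n)$ still comfortably exceeds the $O(\log n\,\log\log n)$ increments you need.
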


\begin{proof}
We will start by setting $c_j = c_j ^*$ for all $j$, and then apply a number of operations to the sequence $c$. Each operation will consist of increasing a single term $c_j$ by $1$ and leaving all other terms the same. 

Suppose that the sequence $\tilde{c}$ agrees with sequence $c$ except in the $j_0$ term where we have $\tilde{c}_{j_0} = c_{j_0} + 1$. Let $\tilde{\rho}$ and $\rho$ be the corresponding sequences, which must then agree for all  $j \le j_0$.  Then, it follows from~(\ref{eq:nice_recurrence}) and Lemma \ref{lem:i'} that 
\begin{eqnarray*}
\frac{\tilde{\rho}_{j_0+1}}{\rho_{j_0+1}} &=& \frac{ \Big( (c_{j_0}+1) - \log_2 {\rho}_{j_0} + O(1) \Big) \cdot  {\rho}_{j_0}}{\Big( c_{j_0} - \log_2 \rho_{j_0} + O(1) \Big) \cdot  \rho_{j_0}} \\
&=& 1+ O\of{\frac{1}{c_{j_0} - \log_2 \rho_{j_0}} } = 1+ O\of{\frac{1}{\log n} }.
\end{eqnarray*}
Now, since $\tilde{\rho}_j \ge \rho_j$, for $j \ge j_0+1$ we have 
\begin{eqnarray*}
\frac{\tilde{\rho}_{j+1}}{\rho_{j+1}} &=& \frac{\Big( c_j - \log_2 \tilde{\rho}_j + O(1) \Big) \cdot  \tilde{\rho}_j}{\Big( c_j - \log_2 \rho_j + O(1) \Big) \cdot \rho_j} \le \frac{\Big( c_j - \log_2 \rho_j + O(1) \Big) \cdot  \tilde{\rho}_j}{\Big( c_j - \log_2 \rho_j + O(1) \Big) \cdot \rho_j}\\
&=& \frac{\tilde{\rho}_{j}}{\rho_{j}} \cdot \of{1+ O\of{\frac{1}{c_j - \log_2 \rho_j} }} = \frac{\tilde{\rho}_{j}}{\rho_{j}} \cdot \of{1+ O\of{\frac{1}{\log n} }}.
\end{eqnarray*}
Hence, 
$$
\frac{\tilde{\rho}_{m}}{\rho_{m}} \le \of{1+ O\of{\frac{1}{\log n} }}^m = 1 + O\of{\frac{1}{\log \log n} }. 
$$
In other words, each time we increment a term of sequence $c$, the effect on $\rho_m$ is negligible. 
However we will now show that if we perform this operation on $c$ enough times (while still not changing it too much each time), the effect on $\rho_m$ can be as much as we need it to be.

Suppose now that $\tilde{c}_j = c_j^* + \frac{ \log n}{\log \log \log n} = c_j^* (1+o(1))$ for all $j$. Our goal is to show that $\tilde{\rho}_m \ge \frac {8}{5} n$. For a contradiction, suppose that it is not the case, that is, $\tilde{\rho}_m < \frac {8}{5} n$. Note that we have 
$$
\log_2 \tilde{\rho}_j = \log_2 \rho^*_j + O(j) 
$$ 
since from Lemma~\ref{lem:rhoj} it follows that both $\tilde{\rho}_j$ and  $\rho^*_j$ are equal to $\exp \{ (j-1) \log \log n + O(j) \}.$ Using~(\ref{eq:nice_recurrence}) as usual, by Lemma \ref{lem:i'} we get that 
\begin{align*}
\frac{\tilde{\rho}_{j+1}}{\rho^*_{j+1}}  &= \frac{\Big( c_j +  \frac{ \log n}{\log \log \log n} - \log_2 \tilde{\rho}_j + O(1) \Big) \cdot  \tilde{\rho}_j}{\Big( c_j - \log_2 \rho^*_j + O(1) \Big) \cdot  \rho^*_j}\\
& = \frac{   c_j- \log_2 \rho^*_j + \frac{ \log n}{\log \log \log n}  +O(j) }{ c_j - \log_2 \rho^*_j +O(1) } \cdot \frac{\tilde{\rho}_j}{\rho^*_j}\\
& = \frac{\tilde{\rho}_{j}}{\rho^*_{j}} \of{ 1+ \Theta \of{\frac{\log n}{(c_j- \log_2 \rho^*_j) \log \log \log n}}}\\
& = \frac{\tilde{\rho}_{j}}{\rho^*_{j}} \of{ 1+ \Theta \of{\frac{1}{\log \log \log n}}} = \frac{\tilde{\rho}_{j}}{\rho^*_{j}} \exp \left \{ \Theta \of{\frac{1}{\log \log \log n}} \right \}
\end{align*}
And so we have 
\begin{eqnarray*}
\tilde{\rho}_m &=& \rho^*_m \cdot \exp \left \{ \Theta \of{\frac{m}{\log \log \log n}} \right \} \\
&=& \Omega \left( \frac {n}{\log n} \right) \cdot \exp \left \{ \Theta \of{\frac{\log n}{(\log \log n) (\log \log \log n)}} \right \} \gg n
\end{eqnarray*}
which is a contradiction and so the sequence $\tilde{c}$ is such that $\tilde{\rho}_m \ge \frac {8}{5} n$.  

Thus we can apply the operation ``increment one term by $1$" to the sequence $c=c^*$ several times so that each term gets increased by at most $\frac{ \log n}{\log \log \log n} = o(\log n)$, and we are able to do so in such a manner that $\rho_m = \of{\frac{8}{5} + o(1)} n$. The proof is finished.
\end{proof}

\begin{definition}\label{def:bereft}
Define the tree $T_{\rho_j} '$ to be the tree $T_{\rho_j}$ with each leaf in the bottom level being removed if it has a loose parent. Call the parents that lose their children \textbf{bereft}.
\end{definition}

Note that by induction and definition of $\rho_j, b_j$, and $T_{\rho_j} '$, we see that $T_{\rho_j} '$ has $\rho_j - b_j$ many vertices, $b_j$ of which  are bereft. It is not difficult to see that by construction, $T_{\rho_j} '$ has the cut-off property. Moreover, if we form another tree $T_{\rho_j} ''$  by re-attaching at most one leaf to each bereft parent of $T_{\rho_j} '$, then $T_{\rho_j}''$ still has the cut-off property.

\bigskip

Our next goal is to show that almost all vertices of $T_{\rho_j}'$ are bereft. Since each bereft vertex has exactly one child in $T_{\rho_j}$, we get that $|T_{\rho_j}| = (2+o(1)) |T_{\rho_j}'|$. In particular, $|T_{\rho_j}'| = (\frac 45 + o(1)) n$.

\begin{lemma}\label{lem:mostlybereft}
For  all $1\le j \le m$ we have $\frac{b_j}{\rho_j -  b_j} \rightarrow 1$ as $n\to \infty$. 
\end{lemma}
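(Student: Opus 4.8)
The plan is to prove the claim $\frac{b_j}{\rho_j - b_j} \to 1$ by showing that $b_j / \rho_j \to 1$, which is equivalent since $\frac{b_j}{\rho_j - b_j} = \frac{b_j/\rho_j}{1 - b_j/\rho_j}$, so it suffices to establish $\rho_j / b_j \to 1$, i.e. $\rho_j - b_j = o(b_j)$. First I would write down the recursions side by side: with $f_j := c_j - i^*(\rho_j) - \sigma$ (a positive integer by Lemma~\ref{lem:i'}, in fact of size $\Omega(\log n)$), we have $\rho_{j+1} = \sigma + 2^{i^*(\rho_j)} + f_j \rho_j$ and $b_{j+1} = f_j b_j$. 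Set $\delta_j := \rho_j - b_j$. Subtracting the two recursions gives the clean linear recursion $\delta_{j+1} = f_j \delta_j + \big(\sigma + 2^{i^*(\rho_j)}\big)$, with $\delta_1 = \rho_1 - b_1 = 2 - 1 = 1$.

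Next I would unroll this: $\delta_{j} = \big(\prod_{t=1}^{j-1} f_t\big)\delta_1 + \sum_{s=1}^{j-1}\big(\sigma + 2^{i^*(\rho_s)}\big)\prod_{t=s+1}^{j-1} f_t$, and likewise $b_j = \prod_{t=1}^{j-1} f_t$ (using $b_1 = 1$). Dividing, $\frac{\delta_j}{b_j} = 1 + \sum_{s=1}^{j-1} \frac{\sigma + 2^{i^*(\rho_s)}}{\prod_{t=1}^{s} f_t}$. So the task reduces to showing this sum tends to $0$ as $n \to \infty$, uniformly in $j \le m$. Here I would use two inputs: (a) each $f_t = \Omega(\log n)$, so $\prod_{t=1}^{s} f_t \ge (\Omega(\log n))^s$, and more importantly $\prod_{t=1}^{s} f_t = b_{s+1} \ge \rho_{s+1}/(\text{something})$ — actually cleaner, I'd bound the numerator against the denominator directly. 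By Observation~\ref{obs:min_degree}-style reasoning, or just from the definition, $2^{i^*(\rho_s)} \le 2(\rho_s - \sigma) \le 2\rho_s$ when $\rho_s > \sigma$ (and is $1$ otherwise), so $\sigma + 2^{i^*(\rho_s)} = O(\rho_s)$. Meanwhile $\prod_{t=1}^{s} f_t = b_{s+1}$, and I claim $b_{s+1} = \rho_{s+1}(1 - o(1))$ would be circular, so instead I use the cruder bound from Lemma~\ref{lem:rhoj}: $\rho_s = \exp\{(s-1)\log\log n + O(s)\}$, hence $\rho_s \le \rho_{s+1}$ and $\rho_{s+1}/\rho_s = f_s + O(1)$ so $f_s \ge \rho_{s+1}/\rho_s - O(1) \ge \frac12 \rho_{s+1}/\rho_s$ for large $n$ (since the ratio is $\Omega(\log n)$). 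Then $\prod_{t=1}^{s} f_t \ge 2^{-s}\prod_{t=1}^s \frac{\rho_{t+1}}{\rho_t} = 2^{-s}\frac{\rho_{s+1}}{\rho_1} = 2^{-(s+1)}\rho_{s+1}$.

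Combining, each term of the sum is $\frac{O(\rho_s)}{2^{-(s+1)}\rho_{s+1}} = O\!\left(2^{s}\frac{\rho_s}{\rho_{s+1}}\right) = O\!\left(2^s / f_s\right) = O\!\left(2^s / \log n\right)$ — but $2^s$ can be as large as $2^m$ with $m = \Theta(\log n / \log\log n)$, which is superpolylogarithmic, so this naive bound is too weak and I need to be more careful: the $2^{-s}$ losses from replacing $f_t$ by $\frac12 \rho_{t+1}/\rho_t$ are wasteful. Instead I would not pull out a factor $\frac12$ each step; rather, write $f_s = \frac{\rho_{s+1} - \sigma - 2^{i^*(\rho_s)}}{\rho_s}$ exactly from the first recursion, so $\prod_{t=1}^{s} f_t = \frac{1}{\rho_1}\prod_{t=1}^{s}\frac{\rho_{t+1} - \sigma - 2^{i^*(\rho_t)}}{\rho_t} \cdot \rho_1= \frac{1}{\rho_1}\prod_{t=1}^s \frac{\rho_{t+1}-\sigma-2^{i^*(\rho_t)}}{\rho_t}$; telescoping the $\rho_t$ in the denominator against $\rho_{t+1}$-ish terms in the numerator, and using that $\sigma + 2^{i^*(\rho_t)} = O(\rho_t) = o(\rho_{t+1})$ since $\rho_{t+1}/\rho_t = \Omega(\log n)$, each factor $\frac{\rho_{t+1}-\sigma-2^{i^*(\rho_t)}}{\rho_t}$ equals $\frac{\rho_{t+1}}{\rho_t}(1 - O(1/\log n))$, so $\prod_{t=1}^s f_t = \frac{\rho_{s+1}}{\rho_1}(1-O(1/\log n))^s = \frac{\rho_{s+1}}{2}\exp\{O(m/\log n)\} = \Theta(\rho_{s+1})$ uniformly (as $m/\log n = O(1/\log\log n) \to 0$). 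Then each term of the sum is $\frac{O(\rho_s)}{\Theta(\rho_{s+1})} = O(1/f_s) = O(1/\log n)$, and summing over $s \le j-1 \le m = O(\log n/\log\log n)$ gives total $O\!\left(\frac{1}{\log\log n}\right) \to 0$. Hence $\delta_j/b_j = 1 + o(1)$, equivalently $b_j/\rho_j \to 1$, equivalently $\frac{b_j}{\rho_j - b_j}\to\infty$ — wait, that's $\to \infty$, not $1$; but $\frac{b_j}{\rho_j - b_j} = \frac{b_j}{\delta_j}$ and $\delta_j/b_j \to 1$ would give ratio $\to 1$, contradiction, so I must recheck: $\delta_j = \rho_j - b_j$ and I showed $\delta_j / b_j \to 1$, hence $\frac{b_j}{\rho_j - b_j} = \frac{b_j}{\delta_j} \to 1$ — consistent, good; the point is $\delta_j \approx b_j$, both being the lower-order-in-$\log$ part, while $\rho_j = \delta_j + b_j \approx 2 b_j$. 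So the main obstacle is exactly the bookkeeping in the last step: controlling $\prod f_t$ precisely enough that the accumulated multiplicative error over all $m$ levels stays $1+o(1)$, which hinges on $m/\log n \to 0$ (Corollary~\ref{cor:msize}) together with $c_j - \log_2 \rho_j = \Omega(\log n)$ (Lemma~\ref{lem:i'}); once those are in hand the rest is routine.
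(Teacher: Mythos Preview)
Your argument is correct and rests on exactly the same two inputs as the paper's proof: Lemma~\ref{lem:i'} ($c_j - \log_2 \rho_j = \Omega(\log n)$) and Corollary~\ref{cor:msize} ($m = o(\log n)$). The organization, however, is considerably more roundabout than necessary. The paper simply tracks the ratio $\rho_j/b_j$: from the two recursions one gets
\[
\frac{\rho_{j+1}}{b_{j+1}} = \frac{\rho_j}{b_j}\cdot\Bigl(1+O\bigl(\tfrac{1}{c_j-\log_2\rho_j}\bigr)\Bigr) = \frac{\rho_j}{b_j}\cdot\bigl(1+O(1/\log n)\bigr),
\]
so after $j\le m = o(\log n)$ steps one has $\rho_j/b_j = 2(1+o(1))$ and hence $b_j/(\rho_j-b_j)\to 1$. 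Your detour through $\delta_j$, unrolling, and series-bounding works, but notice that your key intermediate estimate $\prod_{t=1}^{s} f_t = \tfrac{\rho_{s+1}}{2}(1+o(1))$ is literally the statement $b_{s+1} = \tfrac{\rho_{s+1}}{2}(1+o(1))$, i.e.\ $\rho_{s+1}/b_{s+1}\to 2$ --- which is already the lemma. Everything after that point (bounding the series term by term and summing) is redundant. In short: correct, same ingredients, but you prove the result as a sublemma and then keep going.
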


\begin{proof}
Note that 
\begin{eqnarray*}
\frac{\rho_{j+1}}{b_{j+1}} &=& \frac{\sigma + 2^{i^*(\rho_j)}  +  \Big( c_j - i^*(\rho_j) - \sigma  \Big) \cdot  \rho_j}{\Big( c_j - i^*(\rho_j) - \sigma \Big) \cdot b_j} = \frac{\Big( c_j - \log_2 \rho_j + O(1) \Big) \cdot  \rho_j}{\Big( c_j - \log_2 \rho_j + O(1) \Big) \cdot b_j} \\
&=& \frac{\rho_j}{b_j} \cdot \left(1+O\left(\frac{1}{c_j - \log_2 \rho_j} \right) \right) = \frac{\rho_j}{b_j} \cdot \left(1+O\left(\frac{1}{\log n} \right) \right),
\end{eqnarray*}
since $c_j - \log_2 \rho_j = \Omega(\log n)$ by Lemma~\ref{lem:i'}. Since $\frac {\rho_1}{b_1} = 2$, we have $\frac{\rho_{j}}{b_{j}} = 2 \cdot \left(1+O\left(\frac{j}{\log n} \right) \right)$. Finally, since $j \le m = o(\log n)$, for all $j \le m$ we have that $\frac{\rho_{j}}{b_{j}} \rightarrow 2$ as $n \to \infty$, and the result follows. 
\end{proof}

Now, we are ready to show that $T_{\rho_m}'$ can  be embedded into a random graph.
 
\begin{lemma}\label{lem:embed} If  $\b < \frac{1}{10 \log 2}$ and $0 < \alpha < \frac{\b \log 2}{2}$, then w.h.p. $\G(n,p)$ contains a copy of $T_{\rho_m}'$. 
\end{lemma}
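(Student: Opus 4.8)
The plan is to embed $T_{\rho_m}'$ greedily into $\G(n,p)$ level by level, from the root downward, while carefully tracking how many vertices of the host graph remain unused. At each stage we have partially embedded the tree, and we wish to extend the embedding to the children of the currently-embedded vertices. The key quantity to control is the number of children a vertex is required to have: for a vertex at level $k$ (counted from the top), this number is $c_{m-k-1}$, which is roughly $\frac{1+\eps/2}{\log 2}\log n$ near the top and drops to about $\b\log n$ in the bottom $\alpha\frac{\log n}{\log\log n}$ levels. Since $|T_{\rho_m}'| = (\tfrac45+o(1))n$ by the remark following Lemma \ref{lem:mostlybereft}, at every point in the embedding at least $(\tfrac15+o(1))n$ host vertices are still available. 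Thus when we try to find children for an already-embedded vertex $u$, the number of unused neighbours of $u$ stochastically dominates $\mathrm{Bin}(\tfrac n5, p)$, which has expectation $(\tfrac15+o(1))\frac{1+\eps}{\log 2}\log n$; by the Chernoff bound this is at least $c^*_{m-k-1}(1+o(1))$ — wait, that is not quite enough, so the more honest statement is: for the \emph{top} levels we need $c_{m-k-1}\approx\frac{1+\eps/2}{\log2}\log n$ children, and the expected number of available neighbours is $\approx\frac15\cdot\frac{1+\eps}{\log2}\log n$, which is too small.

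Here is where Problem~3 and the constant $\sigma$ enter, and this is the main obstacle. We do \emph{not} demand that every vertex achieve its full quota of children; instead we allow up to $\sigma$ "bad" children per vertex — children that fail to have the required number of sub-children — which is exactly what the weight-distribution rule \eqref{eq:wdistrib} in Definition \ref{def:tree} was designed to absorb (the $\sigma$ children of weight $1$). So the correct probabilistic claim to prove is: for a fixed vertex $u$ being processed, with probability $1 - o(1/n)$, the number of unused neighbours of $u$ that \emph{themselves} have at least $c_{m-k-2}$ unused neighbours is at least $c_{m-k-1} - \sigma$. To make the expected degree large enough we should be more careful about how many host vertices are consumed: rather than $(\tfrac45+o(1))n$, we should observe that the vertices embedded \emph{so far} (everything strictly above the current level) number only $o(n)$ except possibly near the bottom, because by Lemma \ref{lem:rhoj} the tree is top-heavy — level $j$ from the bottom has $\exp\{(j-1)\log\log n + O(j)\}$ loose vertices, so all but the last few levels together contain $n^{o(1)}$ or at most $n^{1-\Omega(1)}$ vertices. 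Hence when processing a vertex at a top level, nearly all $n$ host vertices are still free, the relevant binomial is $\mathrm{Bin}(n(1-o(1)),p)$ with expectation $(1+o(1))\frac{1+\eps}{\log2}\log n > \frac{1+\eps/2}{\log2}\log n = c^*$, and Chernoff gives the bound with room to spare. When processing vertices in the bottom $\alpha\frac{\log n}{\log\log n}$ levels, many host vertices may already be used, but there the requirement drops to $\b\log n$; since $\b < \frac{1}{10\log2}$, even with only $\tfrac15 n$ vertices free the expectation $\tfrac15\cdot\frac{1+\eps}{\log2}\log n$ comfortably exceeds $\b\log n(1+o(1))$, and Chernoff again yields failure probability $o(1/n)$.

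The steps, in order, are: (1) fix the sequence $c$ from Lemma \ref{lem:adjustc} so that $\rho_m = (\tfrac85+o(1))n$ and hence $|T_{\rho_m}'| = (\tfrac45+o(1))n$; (2) define the greedy level-by-level embedding, processing vertices in BFS order, and at each vertex $u$ reveal the edges from $u$ to the as-yet-unrevealed host vertices, selecting children among those neighbours, marking as "bad" any deficit up to $\sigma$; (3) for each individual vertex $u$, bound the probability of failure (too few good children) by $o(1/n)$ using the Chernoff bound, splitting into the top-levels case (almost all host vertices free, requirement $c^*\approx\frac{1+\eps/2}{\log2}\log n < (1+o(1))\cdot\frac{1+\eps}{\log2}\log n =$ expected available degree) and the bottom-levels case (requirement $\b\log n$, at least $\tfrac15 n$ host vertices free, using $\b<\frac{1}{10\log2}$); (4) take a union bound over the fewer than $n$ vertices of $T_{\rho_m}'$ to conclude the whole embedding succeeds w.h.p. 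The condition $\alpha < \frac{\b\log 2}{2}$ is used implicitly through Lemma \ref{lem:rhoj} and Corollary \ref{cor:msize} to guarantee $m = (1+o(1))\frac{\log n}{\log\log n}$, so that the cumulative error in $\rho_m$ from the adjustments stays $n^{o(1)}$ and so that the tree is genuinely top-heavy, which is what makes the top-level embedding step work. The delicate point — and the one I expect to require the most care — is verifying that the number of host vertices consumed before reaching level $k$ is small enough (relative to the requirement $c_{m-k-1}$ at that level) for \emph{every} $k$ simultaneously; this is a balancing act between the growth rate of the $\rho_j$'s and the thresholds in the sequence $c^*$, and it is precisely why the threshold between the two regimes in $c^*$ was placed at $\alpha\frac{\log n}{\log\log n}$ levels from the bottom.
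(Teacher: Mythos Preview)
Your approach is essentially the paper's: greedy top-down BFS embedding, splitting into the top regime (where $o(n)$ vertices have been used and the target degree is $\frac{1+\eps/2}{\log 2}\log n$) and the bottom $\alpha\frac{\log n}{\log\log n}$ levels (where at least $(\tfrac15+o(1))n$ host vertices remain and the target drops to $\b\log n$, with $\b<\frac{1}{10\log 2}$ giving the needed slack), then absorbing up to $\sigma$ deficient children per vertex into the weight-$1$ slots of~\eqref{eq:wdistrib}.

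Two small points to tighten. First, in step~(3) you say Chernoff gives failure probability $o(1/n)$ at each vertex, but at the top levels Chernoff only gives that a \emph{single} child is bad with probability $n^{-\Theta(\eps^2)}$, which is not $o(1/n)$ when $\eps$ is small; the $o(1/n)$ bound is for the event ``more than $\sigma$ children are bad'', and you need to make explicit that taking $\sigma=\Theta(1/\eps^2)$ large enough turns $(n^{-\Theta(\eps^2)})^{\sigma}$ into $o(1/n)$ (this is exactly the paper's choice of $\sigma$). Second, your appeal to Lemma~\ref{lem:rhoj} for ``top-heaviness'' is slightly off: $\rho_j$ is the weight of a loose vertex at level $j$ from the bottom, not the number of vertices at that level. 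The cleaner bound (and the one the paper uses) is the direct count $\bigl(\tfrac{1+\eps/2}{\log 2}\log n\bigr)^{m-\alpha\log n/\log\log n}=n^{(1-\alpha)(1+o(1))}=o(n)$ on the total number of vertices in the top levels.
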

\begin{proof}
We will embed $T_{\rho_m}'$ in $\G(n,p)$. Select any vertex (arbitrarily) that will serve as a root of the tree. The embedding is done greedily and from the top down, and at each step we reveal the neighbourhood of one vertex. We group vertices in the same level (i.e.\ distance from the root) consecutively. 
The embedding will be determined iteratively as we reveal the random graph. We will not put a vertex of $\G(n,p)$ into our partial embedding until we have exposed all of its children. 

We say that a vertex in level $k$ is \textbf{bad} if its neighbourhood (into the unexposed vertices) is less than $c_{m-k-1}$.  We will show that w.h.p.\ the root is not bad and no vertex has more than $\sigma$ bad children. Any bad children will be put into the partial embedding as leaves, and the other vertices will be arbitrarily assigned (to non-leaves first and then to leaves, if the number of bad children is smaller than $\sigma$). 

The tree $T_{\rho_m} '$ has at most 
$$
\left( \frac{1+\frac{ \eps}{2}}{\log 2} \log n \right)^{m - \alpha \frac{\log n}{\log \log n} } = n^{(1-\alpha) (1+o(1))} = o(n)
$$ 
vertices total in levels $0$ thru $m - \alpha \frac{\log n}{\log \log n} -1$.  Thus, the expected degree (into the unexposed vertices) of each vertex exposed in such a level $k$ is 
\begin{equation}\label{eq:early-avg-deg}
(1+o(1)) d  = (1+o(1)) \frac{1+\eps}{\log 2} \log n > c_k + \frac {\eps}{3 \log 2} \log n
\end{equation}
and so it follows from Chernoff Bound that the probability that a fixed vertex is bad is polynomially small, that is, at most $n^{-\Theta(\eps^2)}$. 
For levels $k$ farther to the bottom, note that the number of vertices that are not embedded yet is always at least $\frac{1+o(1)}{5}n$ and so the expected degree of each exposed vertex in layer $k$ is at least 
$$
\frac{1+o(1)}{5} d = (1+o(1)) \frac{1+\eps}{5 \log 2} \log n > c_k + \frac{1}{10 \log 2} \log n,
$$ 
again yielding that the probability that a fixed vertex is bad is polynomially small (this time the exponent is a universal constant, not a function of $\eps$).

Therefore if $\sigma = \Theta(1/\eps^2)$ is a large enough constant, then w.h.p.\ each vertex has at most $\sigma$ bad children. This proves that our embedding procedure is successful w.h.p.\ and the proof is finished.
\end{proof}

Let $B$ be the set of bereft vertices in $\G(n,p)$ and $R$ be the set of remaining, unexposed vertices that are not embedded into tree yet. Note that $|B| = b_m = \of{\frac{4}{5} + o(1)} n$ and $|R| = n - (1+o(1)) b_m = \of{\frac15 +o(1)}n$. An important property is that no edge between $B$ and $R$ is exposed at this point, so the next Lemma shows that w.h.p.\ set $B$ dominates set $R$ but in such a way that at most one vertex of $T$ is assign to each bereft vertex. This will finish the proof of the main theorem, Theorem~\ref{mainthm}.

\begin{lemma}\label{lem:matching}
W.h.p.\ there is a matching from $R$ to $B$ which saturates $R$.
\end{lemma}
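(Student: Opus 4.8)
The plan is to apply Hall's theorem in its defect form to the bipartite graph $H$ between $R$ and $B$: a matching saturating $R$ exists unless there is a \emph{violating set} $S\subseteq R$ with $|N_H(S)|<|S|$. The structural fact that makes the whole argument go is the one noted just before the lemma: no $R$–$B$ edge has been exposed yet, so conditionally on everything revealed so far the edges of $H$ are present independently with probability $p=\frac{1+\eps}{\log 2}\cdot\frac{\log n}{n}$, while $|R|=(\tfrac15+o(1))n$ and $|B|=(\tfrac45+o(1))n$ are fixed. Hence it suffices to show that w.h.p. $H$ has no violating set, which I would do by a first-moment bound, handling $|S|=1$ separately from $|S|\ge 2$.

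For singletons: the probability that some $v\in R$ has no neighbour in $B$ is at most $|R|(1-p)^{|B|}\le n\exp(-(1+o(1))p|B|)=n^{1-(4/(5\log 2))(1+\eps)(1+o(1))}=o(1)$, since $\frac{4}{5\log 2}>1$. So w.h.p. there is no violating set of size $1$. For $|S|=s\ge 2$ we may assume $S$ is minimal, which forces $|N_H(S)|=s-1$ and — the key point — that every vertex of $N_H(S)$ has at least \emph{two} neighbours in $S$ (if some $w\in N_H(S)$ had a unique neighbour $v\in S$, then $S\setminus\{v\}$ would be a smaller violator). Therefore the bad event is contained in the event that there exist $S\subseteq R$ with $|S|=s$ and $T\subseteq B$ with $|T|=s-1$ such that $S$ sends no edge to $B\setminus T$ and $e_H(S,T)\ge 2(s-1)$. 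For a fixed such pair these two events are independent; since $|B\setminus T|\ge |B|-|R|\ge n/2$, the first has probability at most $(1-p)^{sn/2}\le e^{-psn/2}$, and the second, being $\Pr[{\rm Bin}(s(s-1),p)\ge 2(s-1)]$, is at most $(esp/2)^{2(s-1)}$. Multiplying by $\binom{|R|}{s}\binom{|B|}{s-1}\le (en/(s-1))^{2s-1}$ and using $s/(s-1)\le 2$ and $e^2pn=\Theta(\log n)$, the expected number of minimal violators of size $s$ is at most
\[
\frac{en}{s-1}\,\bigl(e^2pn\bigr)^{2(s-1)}e^{-psn/2}=\frac{en^{\,1-\delta s}}{s-1}\cdot\Theta(\log n)^{2(s-1)},\qquad \delta:=\frac{1+\eps}{2\log 2},
\]
because $e^{-psn/2}=n^{-\delta s}$. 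As $2\delta=\frac{1+\eps}{\log 2}>1$, the $s=2$ term is $O(n^{1-2\delta}\log^2 n)=o(1)$ and the expression decreases geometrically in $s$, so summing over $2\le s\le |R|$ gives $o(1)$. Combined with the singleton case and a union bound, w.h.p. $H$ has no violating set, and the matching exists.

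The main obstacle — and essentially the only subtle point — is the small-$s$ range. The naive first moment (asking only that $S$ have all its neighbours inside some set of size $s-1$, i.e. dropping the condition $e_H(S,T)\ge 2(s-1)$) fails: the entropy factor $\binom{n}{s}\binom{n}{s-1}\approx n^{2s}$ swamps the probability $(1-p)^{\Theta(sn)}=n^{-\Theta(s)}$ because $p=\Theta(\log n/n)$. Using minimality to force $2(s-1)$ edges inside $S\cup N_H(S)$ is exactly what supplies the missing factor $(esp/2)^{2(s-1)}$, which collapses the $\binom{n}{s-1}$-type term to a power of $\log n$; the surviving $e^{-psn/2}$ then dominates, and the fact that $\frac{1}{2\log 2}>\frac12$ (indeed $\frac{1}{\log 2}>1$) is precisely what keeps even the $s=2$ term under control. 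Everything else — the Chernoff/union-bound estimates, the inequality $|B\setminus T|\ge n/2$, and the geometric summation — is routine.
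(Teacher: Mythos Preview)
Your proof is correct and follows essentially the same approach as the paper: Hall's theorem, a union bound for the singleton case, and for $|S|=s\ge 2$ the minimality argument forcing $|N_H(S)|=s-1$ and $e_H(S,T)\ge 2(s-1)$, followed by a first-moment bound summed over $s$. The only cosmetic difference is that you use the cruder inequality $|B\setminus T|\ge |B|-|R|\ge n/2$ in place of the paper's $|B|-(k-1)$, which costs a slightly weaker exponent but still works because $\tfrac{1}{2\log 2}>\tfrac12$; otherwise the computations line up step for step.
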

\begin{proof}
We are going to use Hall's theorem for bipartite graphs. It is enough to show that for every subset $S\subseteq R$, Hall's condition holds, that is, we have that $|N(S) \cap B| \ge |S|$.

We will use the following useful upper bound: 
\begin{align}
&\Pr\sqbs{\nexists \textrm{ matching saturating $R$}} \label{eq:probnomatch}\\
&\le\Pr\sqbs{\exists v\in R\,:\, N(v)\cap B = \emptyset}\nonumber \\
&+ \Pr\sqbs{\exists S\subseteq R, T\subseteq B\,:\,|S|=k\ge 2,\, |T|=k-1,\,N(S)\cap B= T,\,e(S:T) \ge 2(k-1)}\nonumber
\end{align}
where $e(S:T)$ represents the number of edges between $S$ and $T$. The first term bounds the probability that Hall's condition fails for some set of size one. To see why the condition in the second term is equivalent to the property that Hall's condition fails for some set of cardinality at least 2 is slightly more complicated.
Take a smallest size $S\subseteq R$ with $|S|\ge 2$, which violates Hall's condition, i.e.\ $|S| > |T|$ where $T=N(S)\cap B$. If $|S|=k$, then $|T| = k-1$, otherwise we could remove some vertex from $S$ to get a smaller set that violates Hall's condition. Every vertex in $T$ must have degree at least $2$ into $S$, because removing a degree $1$ vertex from $T$ and its unique neighbour in $S$ gives us a smaller set which violates Hall's condition. So the number of edges between $S$ and $T$ must be at least $2(k-1).$

In order to bound the first term in~(\ref{eq:probnomatch}), note that 
\begin{eqnarray*}
\Pr\sqbs{\exists v\in R\,:\, N(v)\cap B = \emptyset} &\le& |R|(1-p)^{|B|} \le n\exp\of{-\frac{4}{5\log 2}\log n} \\
&\le& n\exp\of{- 1.15 \log n} = o(1).
\end{eqnarray*}
To bound the second term in~(\ref{eq:probnomatch}), let $Y$ count the number of sets $S$ and $T$ satisfying the condition in this term. Then we may bound the expectation of $Y$ from above by 
\begin{align*}
\E\sqbs{Y}&\le \sum_{k=2}^{|R|}\binom{|R|}{k}\binom{|B|}{k-1}\binom{k(k-1)}{2(k-1)}p^{2(k-1)}(1-p)^{k(|B|-(k-1))} \\
&\le \sum_{k=2}^{|R|}\bfrac{|R|e}{k}^k\bfrac{|B|e}{(k-1)}^{k-1}\bfrac{kep}{2}^{2(k-1)}\exp\of{-pk(|B|-(k-1))},
\end{align*}
since ${a \choose b} \le ( ae/b )^b$. It follows that
\begin{align*}
\E\sqbs{Y}&\le \sum_{k=2}^{|R|}\exp\left((2k-1)\log\bfrac{n}{k} + 2(k-1)\log\bfrac{k\log n}{n} \right. \\
&\qquad\qquad\qquad \left.-\frac{4}{5\log2}k\log n\of{1-\frac{k}{\frac{4}{5}n} + o(1)} +O(k)\right)\\
&\le \sum_{k=2}^{|R|}\exp\of{\log n +2(k-1)\log\log n - 1.15k\log n\of{1-\frac{5k}{4n} + o(1)} + O(k)}.
\end{align*}
For each value of $k$ such that $2 \le k \le |R|$, each term above is $o(n^{-1.1})$. Since we are summing over $|R|=(\frac15 + o(1))n$ many terms, we get $\E\sqbs{Y} = o(1)$ and so $\Pr\sqbs{Y>0} = o(1)$ by Markov's inequality. It follows that the probability in \eqref{eq:probnomatch} is $o(1)$ and the proof is finished.
\end{proof}

\section{Proof of Theorem~\ref{lowerbdthm}}\label{sec:lowerbdthm}

First, let us concentrate on the lower bound. 
In the rest of this section, set $d = p(n-1) = c \log n$ where $0 < c < \frac{1}{\log 2}$. Let $\eps', \eps'' > 0$ be constants such that $c+\eps'+\eps'' < \frac{1}{\log 2}$. Set $c'=c+\eps'$ and $c'' = c'+\eps''$ so that  $c < c'  <c'' < \frac{1}{\log 2}$. Also define the constant  $\gamma:= \ceil{2\of{\frac{4c+2\eps'}{\eps'^2}}}$.

\bigskip

We will need the following property of a random graph $\G(n,p)$.

\begin{lemma}\label{lem:propertyforlower}
The following properties hold w.h.p.
\begin{enumerate}
\item $G(n,p)$ has no vertices of degree at least $4 \log n$.
\item $G(n,p)$ has no paths of length $\gamma$ consisting of vertices of degree at least $c' \log n$.
\end{enumerate}
\end{lemma}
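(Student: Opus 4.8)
Both parts are first-moment (union bound) arguments, so the plan is to bound the expected number of the forbidden structures and show these expectations tend to $0$.

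For part (i), I would estimate the probability that a fixed vertex $v$ has $d(v) \ge 4\log n$. The degree of $v$ is distributed as ${\rm Bin}(n-1,p)$ with mean $d = c\log n \le \frac{1}{\log 2}\log n < 1.45\log n$. Since $4\log n = (1+\delta)\mu$ with $\delta = 4/c - 1 \ge 4\log 2 - 1 > 1$, the upper-tail Chernoff bound gives $\Pr[d(v)\ge 4\log n] \le \exp\left(-\frac{\delta^2\mu}{2+\delta}\right)$. One checks that the exponent is $-(1+\Omega(1))\log n$ — indeed with $\mu = c\log n$ and $\delta = 4/c-1$ the bound is $\exp\left(-\frac{(4/c-1)^2 c}{2 + 4/c - 1}\log n\right) = \exp\left(-\frac{(4-c)^2/c}{1+4/c}\log n\right) = \exp\left(-\frac{(4-c)^2}{c+4}\log n\right)$, and since $c < \frac{1}{\log 2} < 2$ this exponent is at least $\frac{4}{6}\cdot 4 = \frac{8}{3} > 1$ in absolute value (a crude bound suffices). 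Multiplying by $n$ choices of $v$ still gives $o(1)$. (Alternatively one can simply invoke the standard fact, e.g.\ from \cite{Bol}, that the maximum degree of $\G(n,p)$ with $p = \Theta(\log n/n)$ is $O(\log n)$ with the right constant; but the direct Chernoff computation is self-contained.)

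For part (ii), I would union-bound over all paths on $\gamma+1$ vertices (length $\gamma$). There are at most $n^{\gamma+1}$ such vertex sequences, and for a fixed sequence the probability that all $\gamma$ consecutive pairs are edges is $p^\gamma = \Theta\left((\log n/n)^\gamma\right)$. The delicate part is the requirement that every vertex on the path have degree at least $c'\log n = (c+\eps')\log n$, which is \emph{larger} than the mean degree $c\log n$: this is a lower-tail-on-the-wrong-side event with probability $\exp\left(-\Omega(\log n)\right)$ per vertex, and I want to extract enough savings from it to beat the $n^{\gamma+1}$ from the union bound. The care needed is that the $\gamma$ path-edges are already conditioned, so I should count the degree of each internal vertex into the remaining $n - \gamma - 1$ vertices; this is a ${\rm Bin}(n-\gamma-1,p)$ variable, still with mean $(1+o(1))c\log n$, and for the event $\{{\rm deg} \ge c'\log n\}$ I use the upper-tail Chernoff bound with $\delta = \eps'/c + o(1)$, giving probability at most $\exp\left(-\frac{(\eps'/c)^2 c\log n}{2 + \eps'/c}(1+o(1))\right) = \exp\left(-\frac{\eps'^2}{c(2c+\eps')}\log n(1+o(1))\right) = n^{-\eps'^2/(2c^2+c\eps')(1+o(1))}$. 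These $\gamma+1$ degree events are \emph{not} mutually independent, but each depends on a disjoint block of potential edges (the edges from that vertex to the ``far'' vertices), so I can make them genuinely independent by revealing, for vertex $v_t$ on the path, only its edges to vertices of index larger than all path vertices — or more simply, sequentially expose the path vertices' neighbourhoods into the still-untouched part of the graph, each step giving an independent binomial. The expected number of bad paths is then at most
\[
n^{\gamma+1} \cdot p^\gamma \cdot \left(n^{-\eps'^2/(2c^2+c\eps')+o(1)}\right)^{\gamma+1} \le n \cdot (\log n)^\gamma \cdot n^{-(\gamma+1)\eps'^2/(2c^2+c\eps')+o(1)}.
\]
The choice $\gamma = \lceil 2(4c+2\eps')/\eps'^2\rceil$ is designed precisely so that $(\gamma+1)\cdot\frac{\eps'^2}{2c^2+c\eps'} > \gamma\cdot\frac{\eps'^2}{2c^2+c\eps'} \ge \frac{\eps'^2}{2c^2+c\eps'}\cdot\frac{2(4c+2\eps')}{\eps'^2} = \frac{2(4c+2\eps')}{2c^2+c\eps'} = \frac{4(2c+\eps')}{c(2c+\eps')} = \frac 4c > 1$, with room to spare to absorb the leading $n$ and the $(\log n)^\gamma$; hence the expectation is $o(1)$ and the claim follows from Markov's inequality.

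The main obstacle — and the only place requiring genuine care — is the dependence issue in part (ii): one must organize the exposure so that the $\gamma$ path-edge events and the $\gamma+1$ degree-lower-bound events are handled with honest independence (disjoint edge sets), and simultaneously verify that $\gamma$ is chosen large enough that the per-vertex savings $n^{-\eps'^2/(2c^2+c\eps')}$, raised to the power $\gamma+1$, dominates the $n^{\gamma+1}p^\gamma = n\cdot(\log n)^\gamma$ coming from the number of paths. Everything else is routine Chernoff bookkeeping.
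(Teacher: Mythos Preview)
Your approach is essentially identical to the paper's: union bound plus Chernoff for (i), and for (ii) a first-moment count over paths, handling independence by measuring each path-vertex's degree into the vertices \emph{off} the path (these edge sets are genuinely disjoint, so no sequential-exposure contortions are needed). One arithmetic slip: the Chernoff exponent simplifies to $\frac{\eps'^2}{2c+\eps'}$, not $\frac{\eps'^2}{c(2c+\eps')}$ (you dropped a factor of $c$ when clearing the denominator), so your claimed per-vertex bound is stronger than justified; with the correct exponent your final inequality still gives $\gamma\cdot\frac{\eps'^2}{2c+\eps'}\ge \frac{2(4c+2\eps')}{2c+\eps'}=4>1$, so the conclusion survives, and indeed this matches the paper's use of the weaker exponent $\frac{\eps'^2}{4c+2\eps'}$ with $\gamma\cdot\frac{\eps'^2}{4c+2\eps'}\ge 2$.
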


\begin{proof}
(i) follows easily from Chernoff Bound. 

For (ii), we first note that the expected number of paths on $\gamma$ vertices is $O(n^{\gamma} p^{\gamma - 1}) = O(n\cdot (\log n)^{\gamma-1})$. Given such a path, we are looking for each vertex in the path to have at least $c' \log n - 2$ additional neighbours among the $n-\gamma$ other vertices. By Chernoff Bound, the probability that one vertex in the path has enough neighbours is at most $$\exp \of{- \frac{(\frac{\eps'}{c})^2 c \log n}{2+ \frac{\eps'}{c}} \cdot (1-o(1)) } \le \exp \of{- \frac{\eps'^2}{4c+2\eps'} \log n}.$$
Hence the expected number of paths on $\gamma$ vertices consisting of vertices with degree at least $c'\log n$, is at most 
$$
O\of{n\cdot(\log n)^{\gamma-1} \cdot \exp \of{- \frac{\eps'^2}{4c+2\eps'} \log n}^{\gamma}} = O\of{n^{-1} \cdot(\log n)^{\gamma-1} } = o(1).
$$
So by Markov's inequality, w.h.p., there are no such paths.
\end{proof}

\bigskip

Now, we are ready to show the lower bound.

\begin{proof}[Proof of the lower bound in Theorem~\ref{lowerbdthm}.]
Suppose that in any graph $G$ of maximum degree at most $4 \log n$, the vertex $v$ can acquire $n^{c'' \log 2}$ weight. Then, it follows from Observation~\ref{obs:min_degree} that $d(v) \ge \log_2 \of{ n^{c'' \log 2}} = c'' \log n$. Furthermore, by averaging argument, some neighbour $u$ of $v$ must have acquired at least $\frac{n^{c'' \log 2}}{4 \log n}$ weight, and so  
$$
d(u) \ge \log_2 \sqbs{\frac{n^{c'' \log 2}}{4 \log n} } \ge c'' \log n - O(\log \log n).
$$
Applying the same reasoning inductively $\beta = O(1)$ times, we find a path of length $\beta$ of vertices of degree at least $c'' \log n - O(\log \log n) \ge c' \log n$.

But by Lemma~\ref{lem:propertyforlower}, w.h.p.\ $\G(n,p)$ has max degree at most $4 \log n$ and no long path of high degree vertices, and so w.h.p.\ no vertex can ever get a weight more than $n^{c'' \log 2}$. Thus, at least $n^{1-c'' \log 2}$ many vertices have nonzero weight after any legal sequence of moves and the lower bound holds. 
\end{proof}

For the upper bound in Theorem~\ref{lowerbdthm}, we must show that all the weight can be pushed to at most $n^{1-c+\eps}$ many vertices. To do this we basically follow the embedding proof from Theorem~\ref{mainthm} but this time with many roots. We sketch the idea of this process, and the reader may check the details.

\begin{proof}[Proof sketch of the upper bound in Theorem \ref{lowerbdthm}] Suppose $p= \frac{c+o(1)}{\log 2} \cdot \frac{\log n}{n}$ for some $c \in (0,1)$. We would like to show that for any $0 < \eps' < \min\{c, 1-c\}$, w.h.p., $a_t(\G(n,p)) \le n^{1-c+\eps'}$. We let $\eps = \eps' / 2$ and prove that $a_t(\G(n,p)) \le n^{1-c+\eps + o(1)}$.
Let  $\alpha, \beta > 0$ be any two constants such that $\beta < \frac{c}{10\log 2}$, $\alpha < \frac{\beta\log 2}{2}$ and let \[c_j^* := \begin{cases} 
\b \log n  &\textrm{ if } j \le \alpha \frac{\log n}{\log \log n} \\
  \frac{c-\frac{ \eps}{2}}{\log 2} \log n&otherwise.
 \end{cases}\]
Let $\rho$ be the sequence defined as in Defintion~\ref{def:sequence} with respect to this sequence $c^*$. Let $m$ be the largest integer such that $\rho_m \le n^{c-\eps}$. Then $\rho_m = \Omega( n^{c-\eps} /  \log n)$  and $m = (c-\eps+o(1))\frac{\log n}{\log \log n}$.  At this point in the proof of Theorem~\ref{mainthm}, we would adjust the sequence $c^*$ to get a precise value for $\rho_m$. However, in this case, this step is unnecessary since we may simply adjust the number of copies of $T_{\rho_m}'$ which we embed.  Let $L = L(n)$ be an integer such that $L\cdot|T_{\rho_m}'| =L\cdot(1/2+o(1)) \rho_m=(\frac45 + o(1))n$.  Then $L = n^{1-c+\eps+o(1)}.$ We would like to grow $L$ vertex disjoint copies of $T_{\rho_m}'$. To do this we must have $L$ roots. We begin with $2L$ many vertices which are candidate roots. The probability that a fixed vertex has less than $\frac{c-\eps/2}{\log 2}\log n$ neighbours (among the other $n-2L$ vertices) is at most $n^{-\Theta(\eps^2)}$ by Chernoff Bound. So by Markov's inequality,  w.h.p., at least $L$ of these $2L$ vertices have at least $\frac{c-\eps/2}{\log 2}\log n$ neighbours, and we take these $L$ vertices to be the roots. The other $L$ vertices whose neighbourhoods were exposed now play no part in the embedding and can retain their weight until the end. 

We may now proceed as in Lemma~\ref{lem:embed}. We embed the trees from the top down and group vertices in the same level (this time, distance from their respective root) consecutively. In levels $0$ thru $m- \alpha \frac{\log n}{\log\log n} - 1$, the $L$ trees have at most
\[L\cdot \of{\frac{c-\frac{\eps}{2}}{\log 2}\log n}^{m - \alpha\frac{\log n}{\log\log n}} = n^{1-\alpha + o(1)} = o(n)\]
vertices total. So, as in \eqref{eq:early-avg-deg}, the probability that a fixed vertex is bad is polynomially small. We also have that the number of unexposed vertices is always at least $\frac{1+o(1)}{5}n$, so the polynomial bound on the probability that a vertex is bad holds for levels further down as well. So by taking $\sigma$ to be a large enough constant, we successfully embed the $L$ trees w.h.p.

We are now in the situation where we have $\of{\frac45 +o(1)}n$ bereft vertices $B$, and $\of{\frac15 +o(1)}$ unexposed vertices, $R$. Note that since $p= \frac{c+o(1)}{\log 2}\cdot \frac{\log n}{n}$ and $c$ may be small here, we will not be able to guarantee that there is a matching from $R$ to $B$ which saturates all vertices in $R$. However, it is sufficient to find a matching which saturates all but $n^{1-c + \eps}$ vertices in $R$, since these remaining $n^{1-c+\eps}$ vertices can keep their weight. Indeed, if such a matching is found then we have shown that 
\[a_t(\G(n,p)) \le L + L + n^{1-c+\eps} = n^{1-c+\eps + o(1)}.\]
The first $L$ represents the candidate roots which were discarded, the second $L$ represents the roots of the $T_{\rho_m}'$ which were embedded and which receive all the weight from their trees, and the last term represents the unmatched vertices from $R$.

To show that such a matching exists, we may use the defect version of Hall's Theorem: If $|N(S)\cap B| \ge |S| - q$ for all $S \subseteq R$, then there is a matching which saturates all but $q$ vertices of $R$. Emulating the proof of Lemma~\ref{lem:matching} using this version of Hall's Theorem with $q = n^{1-c+\eps}$ proves the existence of the desired matching.
\end{proof}

\section{Proof of Theorem~\ref{thm:almostalltrees}}\label{sec:almostalltrees}

Before we move to the proof of this result, let us mention that our goal is to provide a simple proof of the conjecture and the constant can be easily improved with more effort.

\begin{proof}[Proof of Theorem~\ref{thm:almostalltrees}]
We say that a subgraph $L=\{v-w-x-y\}$ of a tree $T$ is a \textbf{long leaf} if $L$ is an induced path of length~3; in particular, $\deg(v)=1$, $\deg(w) = \deg(x)=2$ in $T$. Observe that the acquisition number of every graph is bounded from below by the number of long leaves. Indeed, it is straightforward to see that, regardless of a strategy used, for every long leaf $L$ we have that at least one vertex from $\{v,w,x\}$ has to have non-zero weight at the end of the process.

Consider the probability space $\Omega$ of all labelled trees of order $n$ uniformly distributed. Let $T$ be a randomly chosen tree from $\Omega$. Clearly $|\Omega| = n^{n-2}$, due to Cayley's formula, so for every fixed tree $T_0$ on $n$ vertices we have $\Pr(T=T_0)=1/n^{n-2}$.  Our goal is to show that a.a.s.\ the number of long leaves in $T$ is at least $n/(3e^3)$.

Let $X_v = X_v(T)$ be an indicator random variable defined as follows:
\[
X_v = 
\begin{cases}
1& \text{if $v$ is a vertex of degree~1 in a long leaf},\\
0& \text{otherwise.}
\end{cases}
\]
Let $X=X(T)$ be a random variable counting the number of long leaves in $T$, that is, $X=\sum_{v \in V(T)} X_v$. Note that for every $v \in V(T)$
\begin{eqnarray*}
E(X_v) &=& \Pr(X_v = 1) = \frac{(n-1)(n-2)(n-3)(n-3)^{n-5}}{n^{n-2}}\\
&=& (1+o(1)) \left( 1 - \frac 3n \right)^n = (1+o(1)) \frac{1}{e^3},
\end{eqnarray*}
since there are $(n-1)(n-2)(n-3)$ choices for the vertices of the long leaf and there are $(n-3)^{n-5}$ ways to embed a tree on remaining vertices. Hence,
\begin{equation}\label{eq:tree0}
E(X) = \sum_{v \in V(T)} E(X_v) = (1+o(1)) \frac{n}{e^3}.
\end{equation}

Now we are going to apply Chebyshev's inequality to show that a.a.s.\ $X\ge \frac{E(X)}{2} \ge \frac{n}{3e^3}$. It follows that
\[
\Pr\left(X\le \frac{E(X)}{2}\right) \le \Pr\left(|X-E(X)| \ge \frac{E(X)}{2}\right) \le \frac{Var(X)}{\frac{1}{4}(E(X))^2}
= 4\left( \frac{E(X^2)}{(E(X))^2} - 1\right).
\]
Hence, it suffices to show that $\frac{E(X^2)}{(E(X))^2}$ tends to~1 as $n\to \infty$. Clearly,
\begin{equation}\label{eq:tree1}
E(X^2) = \sum_{v,v'} E(X_v X_{v'}) = E(X) + \sum_{v\neq v'} E(X_v X_{v'}) = E(X) + \sum_{v\neq v'} \Pr(X_v=X_{v'}=1),
\end{equation}
where the sums are over ordered pairs. Now, for fixed vertices $v\neq v'$,
\begin{equation}\label{eq:tree2}
\Pr(X_v=X_{v'}=1) = \frac{(n-2)(n-3)(n-4)(n-5)(n-6)^2(n-6)^{n-8}}{n^{n-2}} = (1+o(1)) \frac{1}{e^6},
\end{equation}
since there are $(n-2)(n-3)(n-4)(n-5)$ choices for vertices $w,x,w',x'$ in the two corresponding long leaves $L=\{v-w-x-y\}$ and $L'=\{v'-w'-x'-y'\}$, and $(n-6)^2$ choices for $y,y'$ (note that it might happen that $y=y'$ but other than that the two leaves cannot overlap). 
Consequently, \eqref{eq:tree0}, \eqref{eq:tree1}, and \eqref{eq:tree2} imply that
\[
\frac{E(X^2)}{(E(X))^2} = \frac{(1+o(1)) \frac{n}{e^3} + (1+o(1)) n^2 \frac{1}{e^6} }{\left( (1+o(1)) \frac{n}{e^3}\right)^2} = 1+o(1),
\]
as required. The proof of the theorem is finished.
\end{proof}

\section{Concluding Remarks}\label{sec:conclusion}

In this paper, we showed that $p = \frac{1}{\log 2} \cdot \frac{\log n}{n}$ is the sharp threshold for the property $a_t(\G(n,p))=1$. However, precise behaviour of the total acquisition number in the critical window is not determined and it is left as an open problem. We analyzed sparser graphs showing that for $c \in (0,1)$, w.h.p.
$$
\log_n a_t \left( \G \left( n,\frac{c}{\log 2} \cdot \frac{\log n}{n} \right) \right) \sim 1-c,
$$
so the exponent of the total acquisition number is determined up to $o(1)$ term. It also remains to be analyzed and better understood. 

On the other hand, it is not difficult to see when this graph parameter becomes sub-linear. It was already anticipated by
West~\cite{West1, West2} that $a_t(G)$ is linear for $p = c/n$ for any constant $c>0$ and sub-linear for $p \gg 1/n$. This is true, since for $p = c/n$ we have $\Omega(n)$ isolated vertices w.h.p.\ (see, for example,~\cite{JLR}), and so the total acquisition number is linear w.h.p.
For $p = \omega/n$, where $\omega\to\infty$ the domination number is known to be equal to $(1+o(1))n \log \omega / \omega = o(n)$ w.h.p.~\cite{GLS}, so the total acquisition number is also sub-linear w.h.p.


\begin{thebibliography}{99}

\bibitem{AKS} N. Alon, M. Krivelevich, B. Sudakov, {\em Embedding nearly-spanning bounded degree trees.} Combinatorica {\bf 27} (2007), no. 6, 629--644.


\bibitem{BGRV} J.C. Bermond, L. Gargano, A.A. Rescigno, U. Vaccaro, {\em Fast gossiping by short messages.} SIAM J. Comput. {\bf 27} (1998), no. 4, 917--941 

\bibitem{Bol} B. Bollob\'{a}s, {\em Random Graphs}, Second Edition, Cambridge Studies in Advanced Mathematics, 73, 2001.

\bibitem{gos} S. Boyd, A. Ghosh,B. Prabhakar, D. Shah, {\em Randomized gossip algorithms.} IEEE Trans. Inform. Theory {\bf 52} (2006), no. 6, 2508--2530.

\bibitem{FM} A. M. Frieze, M. Molloy, {\em Broadcasting in random graphs.} Discrete Appl. Math. {\bf 54} (1994), no. 1, 77--79.


\bibitem{G95} A. V. Gerbessiotis, {\em Close-to-optimal and near-optimal broadcasting in random graphs.} Discrete Appl. Math. {\bf 63} (1995), no. 2, 129--150.


\bibitem{GLS} R. Glebov, A. Liebenau and T. Szabo, {\em On the concentration of the domination number of the random draph}, 
preprint.


\bibitem{HHL} S. M. Hedetniemi,  S. T. Hedetniemi, A. L. Liestman,{\em A survey of gossiping and broadcasting in communication networks.} Networks {\bf 18} (1988), 319--349. 

\bibitem{JLR} S. Janson, T. \L uczak and A. Rucinski, {\em Random Graphs},
Wiley-Intersci. Ser. Discrete Math. Optim., 2000. 


\bibitem{K10} M. Krivelevich, {\em Embedding spanning trees in random graphs.} SIAM J. Discrete Math. {\bf 24} (2010), no. 4, 1495--1500.

\bibitem{LS} D. E. Lampert and P. J. Slater, 
{\em The acquisition number of a graph},
 Congr. Numer. {\bf 109} (1995), 203--210.
 
\bibitem{LPWWW} T. D. LeSaulnier, N.~Prince, P.~Wenger, D. B. West, and P.~Worah, 
{\em Total acquisition in graphs},
SIAM J. Discrete Math. {\bf 27} (2013), no. 4, 1800--1819. 

\bibitem{M14} R. Montgomery, {\em Embedding bounded degree spanning trees in random graphs}, preprint, arXiv:1405.6559


\bibitem{PWW} N. Prince, P. S. Wenger, and D. B. West, 
{\em Unit acquisition number}, preprint.

\bibitem{SW} P. J. Slater and Y. Wang, 
{\em Some results on acquisition numbers},
 J. Combin. Math. Combin. Comput. {\bf 64} (2008), 65--78.
 
\bibitem{SW2} P. J. Slater and Y. Wang, 
{\em The competitive-acquisition numbers of paths}, 
Congr. Numer. {\bf 167} (2004), 33--43.

\bibitem{Wen} P. S. Wenger, 
{\em Fractional acquisition in graphs}, 
preprint.

\bibitem{West1} D. B. West,
{\em Acquisition parameters of graphs} (slides), at Graph Theory Workshop in Jinhua, China, December 2013,
\texttt{http://www.math.uiuc.edu/\~{}west/pubs/cumber.pdf}.

\bibitem{West2} D. B. West,
personal communication.

\end{thebibliography}
\end{document}